\tikzset{
mN/.style = {
    draw=#1, semithick, inner sep=0pt}
             }
\definecolor{LemonChiffon}{rgb}{100, 98, 80}
\definecolor{myblue}{rgb}{0,0.4,0.8}
\definecolor{orange}{rgb}{1, 0.4, 0}
\definecolor{mygreen}{rgb}{0, 0.8, 0}
\definecolor{myred}{rgb}{204, 0, 0}
\definecolor{violet}{RGB}{0.4,0.2,1}
\definecolor{brown}{rgb}{0.6, 0.4, 0}
\newtheorem*{rep@XThm}{\rep@title}
\newcommand{\newreptheorem}[2]{%
\newenvironment{rep#1}[1]{%
 \def\rep@title{#2 \ref{##1}}%
 \begin{rep@XThm}}%
 {\end{rep@XThm}}}
\newtheorem{theorem}{Theorem}[section]
\newtheorem{XThm}[theorem]{Theorem}
\newtheorem{lemma}[theorem]{Lemma}
\newtheorem{proposition}[theorem]{Proposition}
\newtheorem{corollary}[theorem]{Corollary}
\theoremstyle{definition}
\newtheorem{definition}[theorem]{Definition}
\newtheorem{remark}[theorem]{Remark}
\newtheorem{example}[theorem]{Example}
\title{Matrix periods and competition periods of Boolean Toeplitz matrices}
\date{}
\author{Gi-Sang Cheon$^{a, b}$, Bumtle Kang$^{b}$, Suh-Ryung Kim$^{b, c}$, and Homoon Ryu$^{b,c}$ \\
{\footnotesize $^a$ \textit{Department of Mathematics, Sungkyunkwan
University, Suwon 16419, Rep. of Korea}}\\
{\footnotesize $^{b}$ \textit{Applied Algebra and Optimization
Research Center, Sungkyunkwan University,}}\\{\footnotesize\textit{
Suwon 16419, Rep. of Korea}}\\
 {\footnotesize $^{c}$ \textit{Department of Mathematics Education,
Seoul National University,}}\\{\footnotesize\textit{
Seoul 08826, Rep. of Korea}}\\
{\footnotesize gscheon@skku.edu, lokbt@hotmail.com, srkim@snu.ac.kr, and ryuhomun@naver.com}}
\begin{document}

\maketitle

\begin{abstract}
In this paper, we study the matrix period and the competition period of Toeplitz matrices over a binary Boolean ring $\mathbb{B} = \{0,1\}$. Given subsets $S$ and $T$ of $\{1,\ldots,n-1\}$, an $n\times n$ Toeplitz matrix $A=T_n\langle S ; T \rangle$ is defined to have $1$ as the $(i,j)$-entry if and only if $j-i \in S$ or $i-j \in T$. We show that if $\max S+\min T \le n$ and $\min S+\max T \le n$, then $A$ has the matrix period $d/d'$ and the competition period $1$ where $d = \gcd (s+t \mid s \in S, t \in T)$ and $d' = \gcd(d, \min S)$. Moreover, it is shown that the limit of the matrix sequence $\{A^m(A^T)^m\}_{m=1}^\infty$ is a directed sum of matrices of all ones except zero diagonal.
In many literatures we see that graph theoretic method can be used to prove strong structural properties about matrices. Likewise, we develop our work from a graph theoretic point of view.
\end{abstract}
\noindent
 {\bf Keywords:} Toeplitz matrix, matrix period, competition period, $m$-step competition graph, Boolean matrix \\[3mm]
 {\bf MSC classification:} 05C20, 05C50, 15B05

\section{Introduction}


A {\em binary Boolean ring} $(\mathbb{B}, +, \cdot)$ is a set $\mathbb{B} = \{0,1\}$ with two binary operations $+$ and $\cdot$ on $\mathbb{B}$ defined by
\[
\begin{array}{c|cc}
+ & 0 & 1 \\
\hline
0 & 0 & 1 \\
1 & 1 & 1
\end{array}  \quad \text{ and }\quad
\begin{array}{c|cc}
\cdot & 0 & 1 \\
\hline
0 & 0 & 0 \\
1 & 0 & 1
\end{array}
\]

In this paper, we consider {\em Boolean matrices} with entries from a binary Boolean ring, and the set of $n \times n$ Boolean matrices is denoted by $\mathbb{B}_n$.
Take $A \in \mathbb{B}_n$.
The {\em matrix period} of $A$ is the
smallest positive integer $p$ for which there is a positive integer $M$ such that $A^m = A^{m+p}$ for any integer $m \ge M$.
We note that the rows $i$ and $j$ of $A^m$ have a common nonzero entry in some column if and only if the $(i,j)$-entry of $A^m(A^T)^m$ is $1$.
Consider the matrix sequence $\{A^m(A^T)^m\}_{m=1}^\infty$.
Since $|\mathbb{B}_n|= 2^{n^2}$, there is the smallest positive integer $q$ such that \[A^{q+i}(A^T)^{q+i}=A^{q+r+i}(A^T)^{q+r+i}\] for some positive integer $r$ and every nonnegative integer $i$.
Then there is also the smallest positive integer $p$ such that $A^{q}(A^T)^q=A^{q+p}(A^T)^{q+p}$.
Those integers $q$ and $p$ are called the {\it competition index} and {\it competition period} of $A$, respectively, which was introduced by Cho and Kim~\cite{cho2013competition}.
Refer to \cite{cho2011competition,kim2008competition,kim2010generalized, kim2015characterization, kim2012bound} for further results on competition indices and competition periods of digraphs.

 An $n\times n$ matrix $A\in\mathbb{B}_n$ is called {\it primitive} if $A^m$ is the all ones matrix for some integer $m\ge1$. The minimum such $m$ is known as the
{\it exponent} of $A$ denoted by ${\rm exp}(A)$. Clearly, if $A\in\mathbb{B}_n$ is primitive, then its matrix period and competition period are 1. Wielandt's theorem \cite{Bru} states that ${\rm exp}(A)\le (n-1)^2+1$ for an $n\times n$ primitive matrix $A\in\mathbb{B}_n$.
Moreover, there is a primitive matrix $W\in\mathbb{B}_n$ such that ${\rm exp}(W)=(n - 1)^2+1$. Recently, it is shown \cite{exp} that the matrix $W$ is permutation equivalent to the (0,1)-Toeplitz matrix for odd $n\ge6$.
In this paper, we extend the work done in \cite{exp} by studying the matrix period and the competition period of Toeplitz matrices in $\mathbb{B}_n$.

A (0,1)-matrix $A=(a_{ij})\in \mathbb{B}_n$ is called a {\em Boolean Toeplitz matrix} if $a_{ij}=a_{j-i} \in \mathbb{B}$, {\it i.e.} $A$ is of the Toeplitz form:
\[
\begin{bmatrix}
a_0 & a_1 & \cdots & a_{n-1} \\
a_{-1} & a_0  & \ddots &\vdots \\
\vdots & \ddots & \ddots  & a_1  \\
a_{-n+1}&\cdots &a_{-1} & a_0
\end{bmatrix}.
\]
Accordingly, a Boolean Toeplitz matrix $A\in \mathbb{B}_n$ is determined by two nonempty subsets $S$ and $T$, not necessarily disjoint, of $\{1,\ldots, n-1\}$ so that $a_{ij}=1$ if and only if $j-i \in S$ or $i-j \in T$.
We assume that $S=\{s_1,\ldots,s_{k_1}\}$ and $T=\{t_1,\ldots,t_{k_2}\}$ where
$$
1\le s_1<\ldots<s_{k_1}<n\quad{\rm and}\quad 1\le t_1<\ldots<t_{k_2}<n.
$$
Note that $S=\{j\mid a_j=1\}$ and $T=\{i\mid a_{-i}=1\}$.
In this context, we denote a Boolean Toeplitz matrix $A$ associated with index sets $S$ and $T$ by $T_n\langle s_1,\ldots,s_{k_1};t_1,\ldots,t_{k_2}\rangle$ or simply by $T_n\langle S;T\rangle$.

Consider the Boolean Toeplitz matrix
\[
A =T_5\langle 2 ; 4 \rangle= \begin{bmatrix}
0 & 0 & 1 & 0 & 0 \\
0 & 0 & 0 & 1 & 0\\
0 & 0 & 0 & 0 & 1 \\
0 & 0 & 0 & 0 & 0 \\
1 & 0 & 0 & 0 & 0
\end{bmatrix}.
\]
Then
\[
A^{3k-1} = \begin{bmatrix}
0 & 0 & 0 & 0 & 1 \\
0 & 0 & 0 & 0 & 0 \\
1 & 0 & 0 & 0 & 0 \\
0 & 0 & 0 & 0 & 0 \\
0 & 0 & 1 & 0 & 0
\end{bmatrix},
\quad
A^{3k} = \begin{bmatrix}
1 & 0 & 0 & 0 & 0 \\
0 & 0 & 0 & 0 & 0 \\
0 & 0 & 1 & 0 & 0 \\
0 & 0 & 0 & 0 & 0 \\
0 & 0 & 0 & 0 & 1
\end{bmatrix},
\quad
A^{3k+1} = \begin{bmatrix}
0 & 0 & 1 & 0 & 0 \\
0 & 0 & 0 & 0 & 0 \\
0 & 0 & 0 & 0 & 1 \\
0 & 0 & 0 & 0 & 0 \\
1 & 0 & 0 & 0 & 0
\end{bmatrix}
\]
for any integer $k \ge 1$.
Thus $A^m$ is not a Toeplitz matrix for any integer $m \ge 2$.
This happening cannot occur for a Toeplitz matrix $T_n \langle S; T \rangle$ with the property that $\max S + \min T \le n$ and $\min S + \max T \le n$ (Theorem~\ref{thm:power}).

In many literatures we see that graph theoretic method can be used to prove strong structural properties about matrices. We proceed this work from a graph theoretic point of view. The {\em support} of an $n\times n$ matrix $A=(a_{ij})$ is the $(0,1)$-matrix with $a_{ij}=1$ whenever $a_{ij}\ne0$.
If the support of $A$ is the adjacency matrix of a digraph $D$  then $D$ is called the {\em digraph of $A$}.
A sequence of $m$ arcs of the form $(v_0,v_1),(v_1,v_2),\ldots,(v_{m-1},v_m)$ is called a {\it directed $(v_0,v_m)$-walk of length $m$. This walk is also denoted by
$$
v_0\rightarrow v_1\rightarrow v_2\rightarrow \cdots \rightarrow v_m.
$$}
 Now let $D$ be a digraph with $n$ vertices. For a positive integer $m$,  a graph is called the {\it $m$-step competition graph} \cite{mstepcomp} of $D$, provided that it has the same vertex set as $D$ and $\{u,v\}$ is an edge if and only if there is a vertex $w$ in $D$ such that there are directed $(u,w)$-walk of length $m$ and directed $(v,w)$-walk of length $m$. We denote the $m$-step competition graph of $D$ by $C^m(D)$.
In particular, the $1$-step competition graph of $D$ is called the {\it competition graph} of $D$ and it is denoted by $C(D)$.


Throughout this paper, we focus on Boolean Toeplitz matrices $A=T_n \langle S;T\rangle$ satisfying the conditions $\max S+\min T \le n$ and $\min S+\max T \le n$. See Remark~\ref{rem:most} to see why those are the most that we may consider.
The digraph $D$ of an $n\times n$ Toeplitz matrix $T_n \langle S;T\rangle$ is assumed to have the vertex set $[n]$, the $n$-set $\{1,\ldots,n\}$. By definition, it is obvious that if $i<j$ then $(i,j)$ is an arc in $D$ if and only if $j-i\in S$, and if $i>j$ then $(i,j)$ is an arc in $D$ if and only if $i-j\in T.$ For example, see Figure 1.

As results, in Section 2 we show that they have the matrix period $d/d'$ and the competition period is $1$ where $d = \gcd (s+t \mid s \in S, t \in T)$ and $d' = \gcd(d, \min S)$ (Theorem~\ref{thm:exp}).
It is worth noting that the competition period of a Toeplitz matrix $A$ is $1$ no matter whether $A$ is primitive.
It is also proved that the limit of the matrix sequence $\{A^m(A^T)^m\}_{m=1}^\infty$ is a directed sum of matrices of all ones except zero diagonal for $A$ with $\max S+\min T \le n$ and $\min S+\max T \le n$ (Corollary~\ref{cor:directsum}).
Theorem~\ref{lem:pqr} plays a key role in proving these two main results and will be separately proven in Section~\ref{se:pf}.
Section~\ref{sec:walks} is devoted to building a directed $(u,v)$-walk having a designated number of certain types of arcs, which will be used as a useful tool in Section~\ref{se:pf}.
Finally, we give an upper bound for the competition index of a specific type of Boolean Toeplitz matrices (Theorem~\ref{thm:compindex}).

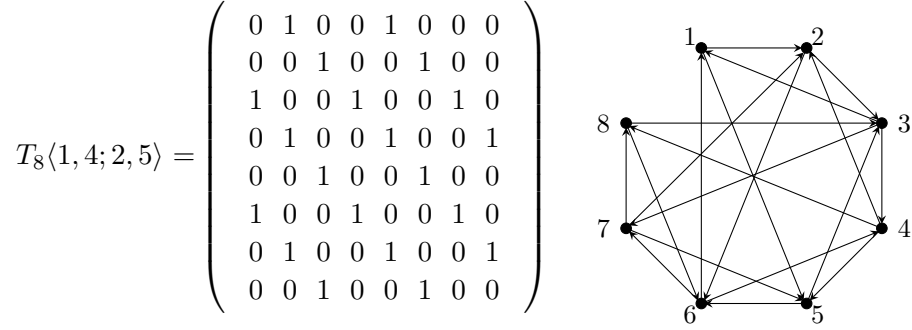
\begin{figure}
\begin{tikzpicture}[>=stealth,thick,baseline]
\node (matrix) [matrix of math nodes,left delimiter=(,right delimiter={)}]
{0 & 1 & 0 & 0 & 1 & 0 & 0 & 0 \\
0 & 0 & 1 & 0 & 0 & 1 & 0 & 0 \\
1 & 0 & 0 & 1 & 0 & 0 & 1 & 0 \\
0 & 1 & 0 & 0 & 1 & 0 & 0 & 1 \\
0 & 0 & 1 & 0 & 0 & 1 & 0 & 0 \\
1 & 0 & 0 & 1 & 0 & 0 & 1 & 0 \\
0 & 1 & 0 & 0 & 1 & 0 & 0 & 1 \\
0 & 0 & 1 & 0 & 0 & 1 & 0 & 0 \\};
\node [left=3mm of matrix]{$T_8 \langle 1,4;2,5\rangle=$};
\end{tikzpicture}
\quad
\begin{tikzpicture}[baseline={(0,3.2)}]
\filldraw (-0.7,4.65) circle (2pt);
\draw (-0.85,4.8) node{\small $1$};
\filldraw (0.7,4.65) circle (2pt);
\draw (0.85,4.8) node{\small $2$};
\filldraw (1.7,3.65) circle (2pt);
\draw (2.0,3.65) node{\small $3$};
\filldraw (1.7,2.25) circle (2pt);
\draw (2.0,2.25) node{\small $4$};
\filldraw (0.7,1.25) circle (2pt);
\draw (0.85,1.1) node{\small $5$};
\filldraw (-0.7,1.25) circle (2pt);
\draw (-0.85,1.1) node{\small $6$};
\filldraw (-1.7,2.25) circle (2pt);
\draw (-2.0,2.25) node{\small $7$};
\filldraw (-1.7,3.65) circle (2pt);
\draw (-2.0,3.65) node{\small $8$};

\draw[->,>=stealth] (-0.7,4.65) -- (0.64,4.65);
\draw[->,>=stealth] (-0.7,4.65) -- (0.66,1.29);

\draw[->,>=stealth] (0.7,4.65) -- (1.66,3.69);
\draw[->,>=stealth] (0.7,4.65) -- (-0.66,1.29);

\draw[->,>=stealth] (1.7,3.65) -- (1.7,2.31);
\draw[->,>=stealth] (1.7,3.65) -- (-1.66,2.29);
\draw[->,>=stealth] (1.7,3.65) -- (-0.66,4.61);

\draw[->,>=stealth] (1.7,2.25) -- (0.74,1.29);
\draw[->,>=stealth] (1.7,2.25) -- (-1.66,3.61);
\draw[->,>=stealth] (1.7,2.25) -- (0.74,4.61);

\draw[->,>=stealth] (0.7,1.25) -- (-0.66,1.25);
\draw[->,>=stealth] (0.7,1.25) -- (1.66,3.61);

\draw[->,>=stealth] (-0.7,1.25) -- (-1.66,2.21);
\draw[->,>=stealth] (-0.7,1.25) -- (1.66,2.21);
\draw[->,>=stealth] (-0.7,1.25) -- (-0.7,4.59);

\draw[->,>=stealth] (-1.7,2.25) -- (-1.7,3.59);
\draw[->,>=stealth] (-1.7,2.25) -- (0.66,1.29);
\draw[->,>=stealth] (-1.7,2.25) -- (0.66,4.59);

\draw[->,>=stealth] (-1.7,3.65) -- (-0.74,1.29);
\draw[->,>=stealth] (-1.7,3.65) -- (1.64,3.65);

\end{tikzpicture}

\caption{A Toeplitz matrix $T_8\langle 1,4;2,5\rangle$ and its digraph.}
\label{fig:t81245}
\end{figure}

\section{Main Results}\label{sec:pre}

Recall that $T_n\langle S;T\rangle$ represents an $n\times n$ Boolean Toeplitz matrix with
$$
S=\{s_1,\ldots,s_{k_1}\}\quad \mbox{ and } \quad T=\{t_1,\ldots,t_{k_2}\}
$$
where $s_1=\min S$, $s_{k_1}=\max S$, $t_1=\min T$, and $t_{k_2}=\max T$.
From now on, we assume that $|S| = k_1$ and $|T| = k_2$ for $T_n \langle S;T \rangle$ unless otherwise mentioned.
\vskip.5pc

The following lemma gives a necessary condition for the existence of a directed walk from a certain vertex to a certain vertex in the digraph of a Toeplitz matrix (it is restated under our definition of the directed graph of a Toeplitz matrix, which is isomorphic to that of the digraph of a Toeplitz matrix in \cite{exp}).

\begin{lemma}[\cite{exp}]\label{lem:cwcorr} Let $D$ be the digraph of $T_n\langle S;T\rangle$ and $W$ be a $(u,v)$-directed walk of length $m$ in $D$. Then there are nonnegative integer sequences
$(a_{i})_{i=1}^{k_1}$ and $(b_i)_{i=1}^{k_2}$ such that
\begin{eqnarray*}
 \sum_{i=1}^{k_1} a_i s_i - \sum_{i=1}^{k_2} b_i t_i = v-u \quad
\mbox{and} \quad m = \sum_{i=1}^{k_1}a_i + \sum_{i=1}^{k_2}
b_i.
\end{eqnarray*}\end{lemma}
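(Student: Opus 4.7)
The statement is essentially a bookkeeping identity: each arc of a directed walk in $D$ falls into one of two classes depending on whether it goes ``right'' (via some $s_i$) or ``left'' (via some $t_i$), and we only need to count occurrences of each index and telescope the displacements.

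The plan is as follows. Let $W\colon u=v_0 \to v_1 \to \cdots \to v_m = v$ be the given walk. Because $S,T\subseteq\{1,\ldots,n-1\}$, the arc $(v_{j-1},v_j)$ of $D$ has $v_{j-1}\ne v_j$, so exactly one of the following holds: either $v_{j-1}<v_j$, in which case by the definition of $D$ the value $v_j-v_{j-1}$ equals a unique element $s_i\in S$; or $v_{j-1}>v_j$, in which case $v_{j-1}-v_j$ equals a unique element $t_i\in T$. This classification is well-defined because $s_1<\cdots<s_{k_1}$ and $t_1<\cdots<t_{k_2}$ are strictly increasing, so the index $i$ is determined by the magnitude, and the sign of $v_j-v_{j-1}$ decides which of the two sequences to consult.

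For $i\in\{1,\ldots,k_1\}$ set
$$a_i = \bigl|\{\,j\in\{1,\ldots,m\}\,:\,v_j-v_{j-1}=s_i\,\}\bigr|,$$
and for $i\in\{1,\ldots,k_2\}$ set
$$b_i = \bigl|\{\,j\in\{1,\ldots,m\}\,:\,v_{j-1}-v_j=t_i\,\}\bigr|.$$
Since every arc of $W$ is counted in exactly one of these quantities, $\sum_{i=1}^{k_1}a_i+\sum_{i=1}^{k_2}b_i=m$. For the first identity, telescope:
$$v-u = v_m-v_0 = \sum_{j=1}^{m}(v_j-v_{j-1}) = \sum_{i=1}^{k_1}a_i s_i - \sum_{i=1}^{k_2}b_i t_i,$$
where the last equality groups the $m$ summands by type and uses the definitions of $a_i,b_i$.

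There is no real obstacle here: the lemma is a direct unpacking of the definition of $D$, and the only subtlety is verifying that the ``type'' of each arc is unambiguous, which follows from strict monotonicity of the indexings of $S$ and $T$ together with the sign of $v_j-v_{j-1}$.
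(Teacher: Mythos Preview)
Your proof is correct; it is the natural counting-and-telescoping argument for this elementary fact. Note that the paper does not actually prove this lemma itself but imports it from \cite{exp}, so there is no in-paper proof to compare against; your argument is precisely the standard one and would serve as a self-contained justification.
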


We will examine $A^m(A^T)^m=[b_{ij}]$ for $A = T_n\langle S;T\rangle$ in a graph theoretic view.
 Recall that $b_{ij}=1$ if and only if the rows $i$ and $j$ of $A^m$ have a common nonzero entry in some column. Thus $b_{ij}=1$ if and only if for some vertex $v$ in $D$ there exist a directed $(u_i,v)$-walk and a directed $(u_j,v)$-walk of length $m$  where $u_i$ and $u_j$ are vertices of $D$ corresponding to the rows $i$ and $j$ of $A$, that is, $u_i$ and $u_j$ are adjacent in $C^m(D)$. Hence, by Lemma~\ref{lem:cwcorr}, there are some nonnegative integer sequences $(a_i)_{i=1}^{k_1}$, $(b_i)_{i=1}^{k_2}$, $(c_i)_{i=1}^{k_1}$, and $(d_i)_{i=1}^{k_2}$ such that
\begin{eqnarray}\label{v-u}
v-u_i = \sum_{i=1}^{k_1} a_i s_i - \sum_{i=1}^{k_2} b_it_i, \quad v-u_j = \sum_{i=1}^{k_1} c_i s_i - \sum_{i=1}^{k_2} d_it_i,
\end{eqnarray}
and
\begin{eqnarray}\label{ell}
m = \sum_{i=1}^{k_1}a_i+\sum_{i=1}^{k_2}b_i = \sum_{i=1}^{k_1}c_i+\sum_{i=1}^{k_2}d_i.
\end{eqnarray}
Since $a_i s_i$ is written as a repeated sum $s_i+\cdots+s_i$ of $a_i$ summands (similarly, $b_it_i$, $c_i s_i$, $d_it_i$),
it follows from (\ref{v-u}) and (\ref{ell}) that
the number of total summands of $s_i$'s and $-t_i$'s representing $v-u_1$ and $v-u_2$ are the same as $m$.
Therefore
\begin{itemize}
\item[($\star$)] $u_2-u_1=(v-u_1)-(v-u_2)$ can be represented by a linear combination (with integer coeffcients) of elements in the set
\begin{eqnarray*}\label{u}
 U_{S,T}:=\{s_j - s_i>0 \mid s_i,s_j \in S\} \cup \{t_j-t_i>0 \mid t_i,t_j \in T\} \cup \{s+t \mid s \in S, t\in T\}.
\end{eqnarray*}
\end{itemize}

Let ${\rm gcd}(U_{S,T})$ and ${\rm gcd}(S+T)$ denote the greatest common divisors of the elements in the set $U_{S,T}$ and $\{s+t \mid s \in S, t\in T\}$, respectively.
From now on, we assume that the sets $S$ and $T$ for $\gcd(S+T)$ are from the Toeplitz matrix $A_n \langle S;T\rangle$.
Then we have the following proposition.
\begin{proposition}\label{prop:gcd}
For $S, T \subseteq [n-1]$, ${\rm gcd}(U_{S,T})={\rm gcd}(S+T)$.
\end{proposition}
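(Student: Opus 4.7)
The plan is a straightforward two-sided divisibility argument, where the key observation is that within $U_{S,T}$ the difference-type elements are redundant because they can be recovered as differences of sum-type elements.

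First I would set $d_1 := \gcd(U_{S,T})$ and $d_2 := \gcd(S+T)$. Since $\{s+t \mid s \in S, t \in T\} \subseteq U_{S,T}$, every element of $S+T$ is divisible by $d_1$, so $d_1 \mid d_2$ immediately from the definition of gcd.

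For the reverse divisibility $d_2 \mid d_1$, the core trick is to express each difference-type generator as a difference of two sum-type generators. Concretely, fix any $t \in T$ (possible since $T$ is nonempty); then for any $s_i, s_j \in S$, both $s_i + t$ and $s_j + t$ lie in $S+T$ and are divisible by $d_2$, so their difference $s_j - s_i$ is divisible by $d_2$. Symmetrically, fixing any $s \in S$ gives $t_j - t_i = (s + t_j) - (s + t_i)$ divisible by $d_2$. Since $d_2$ also divides every $s+t$ by definition, it divides every element of $U_{S,T}$, hence $d_2 \mid d_1$.

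Combining the two divisibilities yields $d_1 = d_2$, which is the claim. There is no real obstacle here beyond the minor bookkeeping that $S$ and $T$ are nonempty, so the fixed elements $s \in S$ and $t \in T$ exist and the two gcds are well defined; if one of the two difference-sets in $U_{S,T}$ happens to be empty (when $|S| = 1$ or $|T| = 1$), the argument simplifies rather than breaks.
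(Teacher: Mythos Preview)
Your proof is correct and matches the paper's approach essentially verbatim: one divisibility is immediate from the inclusion $\{s+t\}\subseteq U_{S,T}$, and the other comes from writing each difference $s_j-s_i$ (resp.\ $t_j-t_i$) as $(s_j+t)-(s_i+t)$ (resp.\ $(s+t_j)-(s+t_i)$) for a fixed $t\in T$ (resp.\ $s\in S$). The paper's own write-up has the labels $d$ and $d'$ swapped in a couple of places, so your version is in fact cleaner.
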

\begin{proof} For brevity, let $d={\rm gcd}(U_{S,T})$ and $d'={\rm gcd}(S+T)$. By definition, $d'\mid d$. We show that $d\mid d'$. Let $i_1, i_2 \in [k_1]$ be given. Since $d \mid (s_{i_1}+t_1)$ and $d \mid (s_{i_2}+t_1)$, we have $d \mid (s_{i_1}-s_{i_2})$.
Similarly, $d \mid (t_{j_1}-t_{j_2})$ for each $j_1, j_2 \in [k_2]$. Thus $d \mid d'$.
\end{proof}

By $(\star)$ and Propostition~\ref{prop:gcd}, we have the following theorem.

\begin{theorem}\label{prop:adj}
Let $D$ be the digraph of $T_n\langle S;T \rangle$.
If two vertices $u$ and $v$ are adjacent in $C^m(D)$ for some positive integer $m$, then $u-v$ is a multiple of ${\rm gcd}(S+T)$.
\end{theorem}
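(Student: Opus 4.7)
The plan is essentially to assemble the pieces that the excerpt has already put in place: the existence of common-terminal walks (from the definition of $C^m(D)$), the representation of walk-length displacements given by Lemma~\ref{lem:cwcorr}, and the gcd equality from Proposition~\ref{prop:gcd}.

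First I would unpack the hypothesis. If $u$ and $v$ are adjacent in $C^m(D)$, then by definition there exists some vertex $w \in [n]$ together with a directed $(u,w)$-walk and a directed $(v,w)$-walk, both of length $m$, in $D$. Applying Lemma~\ref{lem:cwcorr} to each walk, one obtains nonnegative integer sequences $(a_i)_{i=1}^{k_1}$, $(b_i)_{i=1}^{k_2}$, $(c_i)_{i=1}^{k_1}$, $(d_i)_{i=1}^{k_2}$ such that
\[
w - u = \sum_{i=1}^{k_1} a_i s_i - \sum_{i=1}^{k_2} b_i t_i, \qquad w - v = \sum_{i=1}^{k_1} c_i s_i - \sum_{i=1}^{k_2} d_i t_i,
\]
and moreover $\sum_i a_i + \sum_i b_i = m = \sum_i c_i + \sum_i d_i$.

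Next I would take the difference $v - u = (w-u) - (w-v)$ and rewrite it as an integer linear combination of elements of $U_{S,T}$. This is exactly the bookkeeping outlined in observation $(\star)$ preceding the theorem: the equality of total counts $m$ on both sides lets one pair up summands so that every $s_i$ from the first expression is paired either with another $s_j$ (yielding $s_i - s_j \in U_{S,T}$) or with a $-t_j$ from the second expression (yielding $s_i + t_j \in U_{S,T}$), and symmetrically for the remaining terms; negative coefficients are allowed. Hence $v - u$ is an integer linear combination of elements of $U_{S,T}$, and therefore $\gcd(U_{S,T}) \mid (v-u)$.

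Finally I would invoke Proposition~\ref{prop:gcd} to conclude that $\gcd(U_{S,T}) = \gcd(S+T)$ divides $v - u$, which is the desired statement. There is no real obstacle here, since Lemma~\ref{lem:cwcorr} and Proposition~\ref{prop:gcd} do all the heavy lifting; the only mildly delicate point is writing down the pairing argument in $(\star)$ carefully enough to make the integer-linear-combination claim rigorous, but this is a routine matching of the two length-$m$ representations term by term.
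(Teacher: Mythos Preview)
Your proposal is correct and follows exactly the paper's own argument: the paper derives the theorem directly from observation $(\star)$ (which is precisely your pairing argument using Lemma~\ref{lem:cwcorr} and the equality of walk lengths) together with Proposition~\ref{prop:gcd}. There is nothing to add or change.
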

The converse of Theorem~\ref{prop:adj} is also true for sufficiently large $m$ as long as $\max S + \min T \le n$ and $\min S + \max S \le n$ by the following theorem, which is one of our main theorems.

\begin{XThm}\label{thm:exp}
Let $D$ be the digraph of $T_n\langle S;T \rangle$ with $\max S + \min T \le n$, $\min S + \max S \le n$, $d = {\rm gcd}(S+T)$, and $d' = \gcd (d,s_1)$.
Then
\begin{itemize}
\item[(a)] the matrix period of $D$ is $d/d'$;
\item[(b)] the competition period of $D$ is $1$;
\item[(c)] the limit of $\{C^m(D)\}_{m=1}^\infty$ is the disjoint union of cliques $\{v \in [n] \mid v \equiv i \pmod d\}$ over all $i$ such that $1 \le i \le d$.
\end{itemize}
\end{XThm}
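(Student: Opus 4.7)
The plan is to treat Theorem~\ref{lem:pqr} as a black box providing the sufficiency counterpart to Theorem~\ref{prop:adj}, and then derive (a), (b), and (c) from a single underlying mod-$d$ congruence governing when directed walks of a given length exist.

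First I would establish a key preliminary congruence. By Lemma~\ref{lem:cwcorr}, any directed $(i,j)$-walk of length $m$ in $D$ gives nonnegative integers with $j-i=\sum_r a_r s_r-\sum_r b_r t_r$ and $m=\sum_r a_r+\sum_r b_r$. Since $d=\gcd(S+T)$ divides each $s_r+t_1$, every $s_r\equiv s_1\pmod d$ and every $t_r\equiv -s_1\pmod d$, so
\[
j-i\;\equiv\;s_1\!\sum_r a_r+s_1\!\sum_r b_r\;=\;m s_1\pmod d.
\]
Thus every $1$-entry of $A^m$ sits at a position with $j-i\equiv m s_1\pmod d$, and adjacency in $C^m(D)$ forces $u-v\equiv 0\pmod d$ (recovering Theorem~\ref{prop:adj}).

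For part (c), the ``only if'' direction is exactly the remark above. For ``if'', given $u,v\in[n]$ with $u\equiv v\pmod d$, I would apply Theorem~\ref{lem:pqr} to produce, for all sufficiently large $m$, a common target vertex $w$ and directed walks of length $m$ from each of $u,v$ to $w$; this puts $u$ and $v$ in the same clique of $C^m(D)$ for large $m$. Hence $C^m(D)$ stabilizes to the disjoint union of cliques indexed by residue classes mod $d$ in $[n]$. Part (b) then follows immediately: because the sequence $\{C^m(D)\}_m$ eventually becomes constant, so does $\{A^m(A^T)^m\}_m$, giving competition period $1$.

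For part (a), the congruence $j-i\equiv m s_1\pmod d$ shows that $A^m=A^{m+p}$ requires $ps_1\equiv 0\pmod d$, whose minimal positive solution is $p=d/\gcd(d,s_1)=d/d'$; this bounds the matrix period above by $d/d'$. For the matching lower bound I would again invoke Theorem~\ref{lem:pqr} (or its variant for single-walk existence): for all sufficiently large $m$, there is a $1$ in position $(i,j)$ of $A^m$ for \emph{every} admissible pair with $j-i\equiv m s_1\pmod d$ inside the $n\times n$ range that is compatible with the boundary constraints. Since the boundary conditions $\max S+\min T\le n$ and $\min S+\max T\le n$ guarantee symmetric access for every residue class that actually occurs, the support pattern of $A^m$ genuinely shifts as $m s_1\pmod d$ changes, so the matrix period is not smaller than $d/d'$. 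The main obstacle is precisely this sufficiency step for walks: turning the mod-$d$ congruence into an actual construction of long walks of prescribed length requires juggling the $s_r$'s and $t_r$'s while staying inside $[n]$, which is exactly what Theorem~\ref{lem:pqr} and the walk-construction machinery of Section~\ref{sec:walks} are designed to handle.
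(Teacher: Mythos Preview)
Your approach matches the paper's: use Theorem~\ref{lem:pqr} to upgrade the necessary congruence $j-i\equiv ms_1\pmod d$ (this is Proposition~\ref{lem:xiperiod}) to a full characterization of the support of $A^m$ for large $m$, then read off (a), (b), (c). The paper organizes the same steps through Lemma~\ref{lem:abcd} and the equality $P_i=R_i$.

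There is one slip in part~(a): you have the two bounds reversed. The implication ``$A^m=A^{m+p}$ forces $ps_1\equiv 0\pmod d$'' (valid since $A^m$ always has a $1$-entry) yields the \emph{lower} bound, period $\ge d/d'$, because the period must then be a positive solution of that congruence and the smallest such is $d/d'$. The \emph{upper} bound --- that $A^m=A^{m+d/d'}$ for all large $m$ --- is precisely the direction that requires Theorem~\ref{lem:pqr}: once $P_m=R_m$, the support of $A^m$ is exactly $\{(i,j):j-i\equiv ms_1\pmod d\}$, hence depends only on $ms_1\pmod d$, and $(d/d')s_1\equiv 0\pmod d$ gives $A^m=A^{m+d/d'}$. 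As written, both of your explicit arguments in~(a) conclude ``period not smaller than $d/d'$'' and neither states the converse, though the ingredients for it are already present in your second paragraph.
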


Theorem~\ref{thm:exp} can be restated in the viewpoint of matrix as follows.

\begin{corollary}\label{cor:directsum}
Let $A=T_n\langle S;T\rangle$ be a Boolean Toeplitz matrix with $\max S + \min T \le n$, $\min S + \max S \le n$, $d = \gcd(S+T)$, and $d' = \gcd (d,s_1)$.
Then
\begin{itemize}
\item[(a)] the matrix period of $A$ is $d/d'$;
\item[(b)] the competition period of $A$ is $1$;
\item[(c)] there exists a permutation $P$ such that \[
P\left(A^m(A^T)^m  \right)P^T \to
\left[\begin{array}{ccc}
J_{m_1} &  & O \\
 & \ddots &  \\
O &  & J_{m_d}
\end{array}\right]\text{ as } m\rightarrow \infty
\]where $J_{m_i}$ is the all-ones matrix of order $m_i:=|\{v \in [n] \mid v \equiv i \pmod d\}|$.
\end{itemize}
\end{corollary}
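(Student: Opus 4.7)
The plan is to translate Theorem~\ref{thm:exp} into matrix form via the standard correspondence between a digraph and its adjacency matrix. Since $(A^m)_{ij}$ counts directed $(i,j)$-walks of length $m$ in $D$, the matrix period of $A$ is by definition the matrix period of $D$, so (a) is immediate from Theorem~\ref{thm:exp}(a). For (b), recall that the $(i,j)$-entry of $A^m(A^T)^m$ is $1$ exactly when rows $i,j$ of $A^m$ share a common nonzero column; for $i\ne j$ this is precisely the adjacency relation of $C^m(D)$, so the off-diagonal part of the sequence $\{A^m(A^T)^m\}$ is governed by $\{C^m(D)\}$.

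The diagonal case $i=j$ records whether vertex $i$ has an $m$-step out-neighbor in $D$. I would dispose of this by noting that the standing hypothesis forces $s_1+t_1\le n$, and hence $n-s_1\ge t_1$: any vertex $i\in[n]$ satisfies either $i\le n-s_1$, giving the arc $(i,i+s_1)$, or $i>n-s_1\ge t_1$, giving the arc $(i,i-t_1)$. So every vertex has out-degree at least $1$, hence an $m$-step out-neighbor for every $m\ge 1$, and the diagonal of $A^m(A^T)^m$ is identically~$1$. Consequently $\{A^m(A^T)^m\}$ stabilizes with the same competition index and the same period as $\{C^m(D)\}$, and (b) follows from Theorem~\ref{thm:exp}(b).

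For (c), let $P$ be the permutation matrix that reorders $[n]$ so that the residue class $V_i=\{v\in[n]:v\equiv i \pmod d\}$ occupies a contiguous block of rows, for $i=1,\ldots,d$, and write $m_i=|V_i|$. By Theorem~\ref{thm:exp}(c), for every sufficiently large $m$ two distinct vertices $j,k$ are adjacent in $C^m(D)$ iff they lie in the same $V_i$; combined with the all-ones diagonal above, this forces $P(A^m(A^T)^m)P^T$ to be the block-diagonal matrix with blocks $J_{m_i}$, and by (b) this value is assumed for all sufficiently large $m$, hence is the limit in the statement. The argument is essentially bookkeeping on top of Theorem~\ref{thm:exp}; the only non-automatic step, and the only place where things would fail without the standing hypothesis, is the out-degree verification that prevents zero entries from spoiling the diagonal and thus the all-ones block pattern.
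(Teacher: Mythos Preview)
Your proposal is correct and follows the paper's own treatment: the paper does not give a separate proof of this corollary, simply introducing it with ``Theorem~\ref{thm:exp} can be restated in the viewpoint of matrix as follows,'' so the intended argument is exactly the digraph-to-matrix translation you carry out. Your explicit verification that every vertex has out-degree at least $1$ (hence the diagonal of $A^m(A^T)^m$ is all ones) is a detail the paper leaves implicit but is needed to reconcile the clique description in Theorem~\ref{thm:exp}(c) with the all-ones blocks $J_{m_i}$ here.
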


To prove the above theorem, we need to introduce the following sets.
For a positive integer $n$, we denote the interval $[-n+1, n-1]$ by $\mathcal{I}_n$.
\begin{definition}
For nonempty sets $S, T \subseteq [n-1]$ and a positive integer $i$, we introduce the following sets:
\begin{itemize}
\item $P_i = \{\ell \in \mathcal{I}_n \mid \ell \equiv is_1 \pmod d\}$ where $d = \gcd(S+T)$;
\item $Q_i = \{\sum_{j=1}^{k_1} a_js_j - \sum_{j=1}^{k_2} b_j t_j \in \mathcal{I}_n\mid a_j,b_j \in \mathbb{Z}^+_0, \sum_{j=1}^{k_1} a_j + \sum_{j=1}^{k_2} b_j = i\}$;
\item $R_i$ be the set of $\ell \in\mathcal{I}_n$ such that, for any vertices $u$ and $v$ with $v-u = \ell$, there exists a directed $(u,v)$-walk of length $i$ in the digraph of $T_n \langle S;T \rangle$.
\end{itemize}
\end{definition}

 It is immediately true by Lemma~\ref{lem:cwcorr} that $R_i \subseteq Q_i$ for any positive integer $i$.
Moreover, it is a direct consequence of the following proposition that $Q_i \subseteq P_i$ for any positive integer $i$.
Thus
\begin{equation}\label{eq:pqr}
R_i \subseteq Q_i \subseteq P_i
\end{equation}
for any positive integer $i$.

\begin{proposition}\label{lem:xiperiod}
	For nonempty sets $S, T \subseteq [n-1]$, let $d = {\rm gcd}(S+T)$.
	Then for any integers $a_i$, $b_j$,
	\[\sum_{i=1}^{k_1}a_is_i - \sum_{i=1}^{k_2}b_it_i \equiv \left(\sum_{i=1}^{k_1}a_i+\sum_{i=1}^{k_2}b_i\right) s_1 \pmod d.\]
\end{proposition}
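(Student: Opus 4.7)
The plan is to reduce each summand modulo $d$ to a multiple of $s_1$, and then collect terms. The key observation is already present in the proof of Proposition~\ref{prop:gcd}: since $d$ divides every element of $S+T$, differences within $S$ and within $T$ are also killed by $d$, while elements of $S$ and elements of $T$ are congruent mod $d$ up to sign.

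Concretely, I would proceed as follows. First, fix $t_1 \in T$. For each $i \in [k_1]$, both $s_i + t_1$ and $s_1 + t_1$ belong to $S+T$, so $d$ divides their difference, giving $s_i \equiv s_1 \pmod d$. Next, for each $j \in [k_2]$, both $s_1 + t_j$ and $s_1 + t_1$ are in $S+T$, so $d \mid (t_j - t_1)$; combined with $d \mid (s_1 + t_1)$, this yields $t_j \equiv t_1 \equiv -s_1 \pmod d$. Armed with these two congruences, the computation is immediate:
\[
\sum_{i=1}^{k_1} a_i s_i - \sum_{j=1}^{k_2} b_j t_j \;\equiv\; \left(\sum_{i=1}^{k_1} a_i\right) s_1 - \left(\sum_{j=1}^{k_2} b_j\right)(-s_1) \;=\; \left(\sum_{i=1}^{k_1} a_i + \sum_{j=1}^{k_2} b_j\right) s_1 \pmod d.
\]

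There is no real obstacle here; the statement is essentially a restatement of the fact that, modulo $d$, every generator $s_i$ collapses to $s_1$ and every generator $t_j$ collapses to $-s_1$. The only thing to be slightly careful about is that $a_i$ and $b_j$ are allowed to be arbitrary integers (not just nonnegative), but since the argument is purely a congruence identity on each summand, signs and magnitudes of the coefficients are irrelevant. So the proof will be short — essentially two lines of observation followed by one line of substitution.
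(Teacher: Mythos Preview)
Your proof is correct and follows essentially the same approach as the paper: both arguments reduce each $s_i$ to $s_1$ and each $t_j$ to $-s_1$ modulo $d$ using that $d$ divides every element of $S+T$, then substitute. The paper carries this out as a single chain of congruences (writing $s_i = (s_i+t_1)-t_1$ and $t_i = (t_i+s_1)-s_1$), while you isolate the two congruences $s_i \equiv s_1$ and $t_j \equiv -s_1$ first, but the mathematical content is identical.
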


\begin{proof}
	By the definition of $d$,
	\begin{align*}
		\sum_{i=1}^{k_1}a_is_i - \sum_{i=1}^{k_2}b_it_i&=\sum_{i=1}^{k_1}a_i(s_i+t_1-t_1) - \sum_{i=1}^{k_2} b_i(t_i+s_1-s_1) \\
		&\equiv \sum_{i=1}^{k_1}a_i(-t_1) -\sum_{i=1}^{k_2}b_i(-s_1) \pmod d \\
		&\equiv \sum_{i=1}^{k_1}a_i(-t_1+(s_1+t_1)) - \sum_{i=1}^{k_2} b_i(-s_1) \pmod d \\
		&= \left( \sum_{i=1}^{k_1}a_i +\sum_{i=1}^{k_2}b_i \right) s_1. \qedhere
	\end{align*}
\end{proof}

\begin{example}
Let $D$ be the digraph of $T_8 \langle 1,4 ; 2, 5 \rangle$.
Then $\gcd(S+T) = 3$ and $P_3 = \{-6, -3, 0, 3, 6\}$.
Since
\begin{gather*}
-6 = (0 \times s_1 + 0 \times s_2) - (3 \times t_1 + 0 \times t_2); \quad -3 = (1 \times s_1 + 0 \times s_2) - (2 \times t_1 + 0\times t_2); \\
0 = (2\times s_1 + 0\times s_2) - (1\times t_1 + 0\times t_2); \quad 3 = (3 \times s_1 + 0 \times s_2) - ( 0 \times t_1 + 0 \times t_2); \\
\quad 6 = (2 \times s_1 + 1 \times s_2) - (0 \times t_1 + 0 \times t_2),
\end{gather*}
we have $Q_3 \supseteq \{-6, -3, 0, 3, 6\}$.
Then $P_3 = Q_3$ by \eqref{eq:pqr}.

To check whether $\ell$ is an element of $R_3$ or not, one may check existence of a directed $(u,v)$-walk of length $3$ for every pair $(u,v)$ with $v-u = \ell$.
For $\ell = -3$, $(4,1), (5,2), (6,3), (7,4), (8,5)$ are the pairs $(u,v)$ with $v- u = \ell$ and
$\ell \in R_3$ since there are directed walks in $D$ such as
\begin{gather*}
4 \rightarrow 5 \rightarrow 3  \rightarrow 1; \quad 5 \rightarrow 6 \rightarrow 4 \rightarrow 2; \quad
6 \rightarrow 7 \rightarrow 5 \rightarrow 3;\\ 7 \rightarrow 8 \rightarrow 6 \rightarrow 4;
 \quad 8 \rightarrow 6 \rightarrow 4 \rightarrow 5.
\end{gather*}
Similarly, one can check to conclude that $R_3 \supseteq \{ -6, -3, 0, 3, 6\}$.
Therefore $P_3 = Q_3 = R_3$.
Indeed, there exists a sufficiently large positive integer $M_D$ such that if $i \ge M_D$, then $P_i = Q_i = R_i$, by Theorem~\ref{lem:pqr} below.
One may guess that $P_i = Q_i = R_i$ for every positive integer $i$.
Yet, not necessarily $P_i= Q_i = R_i$ for every positive integer $i$ as one can show that
$P_1 = \{ -5, -2, 1, 4, 7\} \supsetneq \{-5, -2, 1, 4\} = R_1$.
\end{example}

Theorem~\ref{thm:exp}, which is our main result, can be proven by the following two assertions.

 \begin{lemma}\label{lem:abcd}
For nonempty sets $S, T \subseteq[n-1]$, let $d=\gcd(S+T)$ and $d' = \gcd (s_1,d)$.
Then the following are true for any positive integer $i$:
\begin{itemize}
\item[(a)] $P_i = P_{i+d/d'}$;
\item[(b)] $P_i,\ldots, P_{i-1+d/d'}$ are mutually disjoint;
\item[(c)] $P_i = \{\ell \in \mathcal{I}_n \mid \ell-s_1 \in P_{i-1} \text{ or } \ell+t_1 \in P_{i-1}\} $ for any $i \ge 2$.
\end{itemize}
\end{lemma}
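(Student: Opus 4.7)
The plan is to treat each part as a short exercise in modular arithmetic, resting on two preliminary facts: (i) $d'\mid s_1$ and $\gcd(s_1/d',\,d/d')=1$, both immediate from the definition $d'=\gcd(s_1,d)$; and (ii) $s_1+t_1\equiv 0\pmod d$, which holds because $s_1+t_1\in S+T$ and $d=\gcd(S+T)$. The restriction ``$\ell\in\mathcal{I}_n$'' in the definition of $P_i$ becomes a genuine constraint only in part (c), which is where the hypotheses $\max S+\min T\le n$ and $\min S+\max T\le n$ (in particular their consequence $s_1+t_1\le n$) will enter.

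For (a), it suffices to verify $is_1\equiv (i+d/d')s_1\pmod d$, i.e.\ $d\mid (d/d')s_1$; writing $s_1=d'(s_1/d')$, the right-hand side equals $d(s_1/d')$, so the divisibility is immediate.

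For (b), if $P_{i+r}\cap P_{i+s}\ne\emptyset$ with $0\le r<s\le d/d'-1$, then $(s-r)s_1\equiv 0\pmod d$. Dividing through by $d'$ gives $(d/d')\mid (s-r)(s_1/d')$, and coprimality of $s_1/d'$ and $d/d'$ forces $(d/d')\mid (s-r)$, contradicting $0<s-r<d/d'$.

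For (c), both inclusions rest on the identity $\ell-s_1\equiv \ell+t_1\equiv (i-1)s_1\pmod d$ whenever $\ell\equiv is_1\pmod d$, which uses $s_1+t_1\equiv 0\pmod d$. The reverse inclusion is then immediate since $\ell\in\mathcal{I}_n$ is given by hypothesis. For the forward inclusion, given $\ell\in P_i$ I must produce one of $\ell-s_1$ or $\ell+t_1$ inside $\mathcal{I}_n$; I will split on whether $\ell\ge s_1-n+1$. In the affirmative case, $\ell-s_1\in\mathcal{I}_n$ follows together with the trivial upper bound $\ell-s_1\le n-1-s_1\le n-2$. Otherwise $\ell<s_1-n+1$, and then $\ell+t_1<s_1+t_1-n+1\le 1$ by $s_1+t_1\le n$; combined with the trivial lower bound $\ell+t_1\ge -n+2$ this places $\ell+t_1$ in $\mathcal{I}_n$. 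This case split is the only spot where the Toeplitz bounds on $S$ and $T$ actually play a role, and I expect it to be the main, though still elementary, technical point.
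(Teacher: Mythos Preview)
Your arguments for (a) and (b) are correct and essentially identical to the paper's. For (c) the congruence observations and the reverse inclusion are fine, but there is a small flaw in the forward inclusion: you invoke $s_1+t_1\le n$, which is \emph{not} among the lemma's hypotheses (the lemma assumes only $S,T\subseteq[n-1]$; the Toeplitz bounds $\max S+\min T\le n$ and $\min S+\max T\le n$ appear only in later results). Fortunately the extra assumption is unnecessary even within your own case split. In the case $\ell<s_1-n+1$, note that $s_1\le n-1$ gives $s_1-n+1\le 0$, so $\ell\le -1$; hence $\ell+t_1\le t_1-1\le n-2$, which already yields the required upper bound without using $s_1+t_1\le n$.

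The paper's proof of (c) sidesteps this entirely by splitting on the sign of $\ell$: if $\ell\le 0$ then $-n+1<\ell+t_1\le t_1\le n-1$, while if $\ell\ge 0$ then $-(n-1)\le -s_1\le \ell-s_1\le n-1$. Either case split works using only $s_1,t_1\in[n-1]$; the Toeplitz conditions genuinely play no role in this lemma, contrary to what you expected.
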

\begin{proof}
Fix a positive integer $i$. By the definition of $P_i$, $(i+d/d')s_1 = is_1 + (s_1/d')d \equiv is_1 \pmod d$, so $P_i = P_{i+d/d'}$. Therefore (a) holds.
To show (b), suppose $(i+j) s_1 \equiv (i+k) s_1 \pmod d$ for $\{j,k\} \subseteq [d/d']$.
Then $d \mid (k-j) s_1$, so $d/d' \mid (k-j)(s_1/d')$.
Since $d/d'$ and $s_1/d'$ are relatively prime, $d/d' \mid k-j$ and so $i+j=i+k$.
Therefore $P_1,\ldots,P_{d/d'}$ are mutually disjoint.
Finally, we show (c). If $\ell \in P_{i-1}$, then $\ell+s_1 \equiv (i-1)s_1 + s_1 \equiv is_1 \pmod d$ and $\ell-t_1 \equiv (i-1)s_1 -t_1 \equiv i s_1 \pmod d$.
Thus
 \[\big( \{\ell + s_1 \in \mathcal{I}_n  \mid \ell \in P_{i-1}\} \cup \{\ell -t_1 \in \mathcal{I}_n  \mid \ell\in P_{i-1} \} \big)\subseteq P_i.\]
Now take $m \in P_i$.
Then $-n+1 \le m \le n-1$,
\[m-s_1 \equiv is_1 - s_1 \equiv (i-1)s_1 \pmod d,\]
and
\[m+t_1 \equiv is_1 - s_1 \equiv (i-1)s_1 \pmod d.\]
Therefore, if $m \in [-n+1,0]$, then \[m+t_1 \ge -n+1 \quad \text{and} \quad m+t_1 \le t_1   \le n-1\]
and so $m+t_1 \in P_{i-1}$, i.e. $m \in
\{\ell - t_1\in \mathcal{I}_n  \mid \ell \in P_{i-1} \}$;
if $m \in P_i \cap [0,n-1]$, then
\[m-s_1 \ge -s_1 \ge -t_1+1 \ge -n+1 \quad \text{and} \quad m-s_1 \le n-1 \]
and so $m-s_1 \in P_{i-1}$, i.e. $\ell \in \{\ell+s_1\in \mathcal{I}_n   \mid \ell \in P_{i-1} \}$.
Thus \[P_i \subseteq  \{\ell + s_1\in \mathcal{I}_n  \mid \ell \in P_{i-1}\} \cup \{\ell -t_1 \in \mathcal{I}_n \mid \ell \in P_{i-1} \}.\]
Hence, $P_i = \{\ell \in \mathcal{I}_n  \mid \ell-s_1 \in P_{i-1} \text{ or } \ell+t_1\in P_{i-1}\}$.
\end{proof}
The following theorem plays a key role in proving Theorem~\ref{thm:exp}.
The next two sections will be devoted to proving it.

\begin{theorem}\label{lem:pqr}
 Let $D$ be the digraph of $T_n\langle S;T\rangle$ with $\max S + \min T \le n$, $\min S + \max S \le n$, and $d = {\rm gcd}(S+T)$.
Then there is a positive integer $M$ such that $P_i = Q_i=R_i$ for any integer $i \ge M$.
\end{theorem}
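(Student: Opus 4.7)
My plan is to prove the remaining inclusion $P_i \subseteq R_i$ for all $i$ sufficiently large; together with the chain $R_i \subseteq Q_i \subseteq P_i$ from \eqref{eq:pqr}, this will yield $P_i = Q_i = R_i$. I would argue $P_i \subseteq R_i$ by induction on $i$, exploiting the recursion
\[
P_i = \{\ell \in \mathcal{I}_n \mid \ell - s_1 \in P_{i-1} \text{ or } \ell + t_1 \in P_{i-1}\}
\]
of Lemma~\ref{lem:abcd}(c).

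\textbf{Inductive step.} Suppose $P_i \subseteq R_i$ for some $i \ge M_0$ (to be specified) and let $\ell \in P_{i+1}$. Fix any pair $(u, v) \in [n]^2$ with $v - u = \ell$. By Lemma~\ref{lem:abcd}(c), at least one of $\ell - s_1 \in P_i$ or $\ell + t_1 \in P_i$ holds, and I construct a $(u, v)$-walk of length $i + 1$ by appending one final arc to a length-$i$ walk given by the inductive hypothesis. If $\ell - s_1 \in P_i$ and $v \ge s_1 + 1$, set $w := v - s_1 \in [n]$: the inductive hypothesis supplies a $(u, w)$-walk of length $i$ to which we append the $s_1$-arc $(w, v)$. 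If $\ell + t_1 \in P_i$ and $v \le n - t_1$, set $w' := v + t_1 \in [n]$ and append the $t_1$-arc $(w', v)$ analogously. A short boundary analysis shows that at least one option applies to every admissible $(u, v)$: the simultaneous failure of both would require $v \le s_1$ and $v \ge n - t_1 + 1$, forcing $s_1 + t_1 \ge n + 1$ and contradicting the hypothesis $s_1 + t_{k_2} \le n$; when only one of $\ell - s_1, \ell + t_1$ lies in $P_i$ (because the other is outside $\mathcal{I}_n$), the inequality $\ell \ge n - t_1$ or $\ell \le s_1 - n$ automatically forces $v \ge s_1 + 1$ or $v \le n - t_1$, validating the remaining option.

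\textbf{Base case.} It remains to exhibit some $M_0$ with $P_{M_0} \subseteq R_{M_0}$. Here I would invoke the walk-building tools of Section~\ref{sec:walks}. By the Sylvester--Frobenius theorem applied to the numerical semigroup $\langle s + t : s \in S, t \in T\rangle \subseteq d \mathbb{Z}_{\ge 0}$ (whose gcd equals $d$ by Proposition~\ref{prop:gcd}), every sufficiently large multiple of $d$ admits a nonnegative integer representation in terms of $\{s + t\}$. For $\ell \in P_{M_0}$, setting $m := (M_0 s_1 - \ell)/d$ (a nonnegative integer that grows linearly with $M_0$) and expanding $md$ in this semigroup, and combining this with the identities $s_j - s_1, s_1 + t_l \in d\mathbb{Z}$ of gcd $d$, produces a nonnegative representation
\[
\ell = \sum_j a_j s_j - \sum_l b_l t_l, \qquad \sum_j a_j + \sum_l b_l = M_0;
\]
moreover, for $M_0$ larger still the coefficients $a_j, b_l$ may be forced above any prescribed threshold. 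The lemmas of Section~\ref{sec:walks}, which invoke the hypotheses $\max S + \min T \le n$ and $\min S + \max T \le n$, then order these $M_0$ arcs into a $(u, v)$-walk within $[n]$ for every admissible pair, establishing $\ell \in R_{M_0}$.

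The principal obstacle I anticipate is the base case: translating an algebraic representation of $\ell$ into an actual walk that stays within $[1, n]$ is combinatorially intricate, which is the very reason Section~\ref{sec:walks} is dedicated to walk construction and why both boundary hypotheses are used there. The inductive step, by contrast, is a clean local argument relying only on the recursion in Lemma~\ref{lem:abcd}(c) together with the inequality $s_1 + t_1 \le n$.
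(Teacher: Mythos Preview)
Your inductive step is correct and is actually cleaner than the paper's extension argument. Where the paper, in its Section~\ref{se:pf}, re-invokes the full walk-building machinery of Theorem~\ref{lem:walk} for \emph{every} $\ell \ge M$ (expressing $p \in P_\ell$ as $p_0 + as_1 - bt_1$ with $p_0 \in P_M$ and then constructing a length-$\ell$ walk from scratch), you obtain the extension by appending a single $s_1$- or $t_1$-arc and a short boundary check using only $s_1 + t_1 \le n$. That is a genuine simplification of the ``for all $i\ge M$'' part.

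The gap is in your base case. The Sylvester--Frobenius expansion $md=\sum_{j,l}c_{j,l}(s_j+t_l)$ does not produce the representation you claim: substituting into $\ell=M_0s_1-md$ gives
\[
\ell=(M_0-\textstyle\sum_l c_{1,l})s_1-\sum_{j\ge 2}\bigl(\textstyle\sum_l c_{j,l}\bigr)s_j-\sum_l\bigl(\textstyle\sum_j c_{j,l}\bigr)t_l,
\]
with \emph{negative} $s_j$-coefficients for $j\ge 2$, so this is not an element of $Q_{M_0}$, and the vague appeal to ``the identities $s_j-s_1,\ s_1+t_l\in d\mathbb Z$'' does not repair it. The paper avoids this by a different number-theoretic route: B\'ezout gives $d=\sum a_is_i-\sum b_it_i$ with $\sum a_i+\sum b_i=0$ (Lemma~\ref{prop:gcd-1}), from which one manufactures a common-length ladder of representations of $r+d,\dots,r+kd$ (Theorem~\ref{lem:cons}) and then shifts $r$ by multiples of $s_1$ or $t_1$ to cover $\mathcal{I}_n$. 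A second, separate step---inflating the $s_1$- and $t_1$-coefficients by $ct_1$ and $cs_1$ respectively (which leaves both the value and the residue of the total length mod $d$ unchanged)---is what guarantees that Theorem~\ref{lem:walk}(b) applies uniformly over all starting vertices $v$; your remark that ``the coefficients may be forced above any prescribed threshold'' gestures at this but does not supply the mechanism. Since the base case is where essentially all of the work lies, the proposal as it stands does not yet constitute a proof.
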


Theorem~\ref{lem:pqr} also enables us to prove Theorem~\ref{thm:power} mentioned earlier.

\begin{proof}[Proof of Theorem~\ref{thm:exp}]
Take two vertices $u,v$ in $D$.
Then $u-v \in \mathcal{I}_n $.
Since $s_{k_1} \le n-t_1$ and $t_{k_2} \le n-s_1$, it follows from Theorem~\ref{lem:pqr} that
there is a positive integer $M$ such that, for any integer $i \ge M$,
\begin{equation} \label{eq:PR}
P_i=R_i.
\end{equation}
Then $v-u \equiv i s_1 \pmod d$ if and only if there is a directed $(u,v)$-walk of length $i$ for any vertices $u$ and $v$ in $D$.

Take an integer $\ell \ge M$.
To show (a), we note that $P_\ell = P_{\ell + d/d'}$ by Lemma~\ref{lem:abcd}(a).
Then, by \eqref{eq:PR}, $R_\ell = R_{\ell + d/d'}$.
Thus $D^\ell = D^{\ell+d/d'}$.

Take a positive integer $p < d/d'$. Then $P_{\ell}$ and $P_{\ell+p}$ are disjoint by Lemma~\ref{lem:abcd}(b).
Thus there is an integer $x \in \mathcal{I}_n$ such that $a \in P_\ell \setminus P_{\ell+p}$, that is, $a \equiv\ell s_1 \pmod d$ and $a \not \equiv (\ell+p)s_1 \pmod d$.
Since $a \in \mathcal{I}_n$, there are some vertices $u, v$ in $D$ such that $v-u = a$.
Since $P_\ell = R_\ell$ and $P_{\ell+p}=R_{\ell+p}$, there is a directed $(u,v)$-walk of length $\ell$ while there is no directed $(u,v)$-walk of length $\ell+p$.
Thus $D^\ell \ne D^{\ell+p}$.
Hence the matrix period of $D$ is $d/d'$.

To show (b) and (c), we take two vertices $u$ and $v$ in $D$.
We may assume $u < v$.
Suppose $u \equiv v \pmod d$.
By the division algorithm, $\ell s_1 =dq+r$ for some integers $q$ and $r$ with $0 \le r \le d-1$.
Let $w=v-r$. Then $v-w = \ell s_1 - dq$ and $u < u+d \le v-r \le v$.
Since $u \equiv v \pmod d$, $v-w \equiv u-w \equiv \ell s_1 \pmod d$.
Thus, by \eqref{eq:PR}, there exist a directed $(v,w)$-walk of length $\ell$ and a directed $(u,w)$-walk of length $\ell$ in $D$.
Therefore $u$ and $v$ are adjacent in $C^\ell(D)$.
Thus \[\{v \in V(D) \mid v \equiv i \pmod d\}\] is a clique in $C^\ell(D)$ for $i=1, \ldots, d$.
If $u \not \equiv v \pmod d$, then $u$ and $v$ are not adjacent in $C^\ell(D)$ by Proposition~\ref{prop:adj}.
Hence $C^\ell(D)$ is a disjoint union of $d$ cliques $\{v \in V(D) \mid v \equiv 1 \pmod d\}, \ldots, \{v \in V(D) \mid v \equiv d \pmod d\}$.
Consequently, we have shown that the competition period of $D$ is $1$ and the limit of $\{C^m(D)\}_{m=1}^\infty$ is  the disjoint union of cliques $\{v \in V(D) \mid v \equiv 1 \pmod d\}, \ldots, \{v \in V(D) \mid v \equiv d \pmod d\}$.
\end{proof}

Given a digraph $D$ and a positive integer $m$, we denote by $D^m$ the digraph with vertex set same as $D$ and an arc $(u,v)$ if and only if there is a directed $(u,v)$-walk of length $m$.

\begin{theorem}\label{thm:power}
Let $A = T_n \langle S; T \rangle$ be a Boolean Toeplitz matrix with \[\max S + \min T \le n\quad \text{and} \quad \min S + \max T \le n.\]
Then there exists some positive integer $M$ such that, for every positive integer $m> M$, $A^m$ is a Toeplitz matrix.
\end{theorem}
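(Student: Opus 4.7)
My plan is to deduce Theorem~\ref{thm:power} as an essentially immediate corollary of Theorem~\ref{lem:pqr}. The matrix $A^m$ is Toeplitz precisely when its $(u,v)$-entry depends only on the difference $v-u$. Since $(A^m)_{uv}=1$ if and only if the digraph $D$ of $A$ admits a directed $(u,v)$-walk of length $m$, the statement reduces to the following uniformity: for all sufficiently large $m$ and every $\ell \in \mathcal{I}_n$, either every pair of vertices $(u,v)$ with $v-u=\ell$ is joined by a directed walk of length $m$ in $D$, or no such pair is.

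First I would invoke Theorem~\ref{lem:pqr}, whose hypotheses coincide with those of Theorem~\ref{thm:power}, to produce an integer $M$ such that $P_m = Q_m = R_m$ for every $m \ge M$. Fix such an $m$ and any $\ell = v - u \in \mathcal{I}_n$. If $\ell \in Q_m$, then $\ell \in R_m$, and the very definition of $R_m$ guarantees that every pair $(u',v')$ with $v'-u'=\ell$ is connected by a directed walk of length $m$, so $(A^m)_{u'v'}=1$ for all such pairs. If instead $\ell \notin Q_m$, Lemma~\ref{lem:cwcorr} forbids the existence of any such walk between any pair of vertices whose difference equals $\ell$, so $(A^m)_{u'v'}=0$ for all such pairs. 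Either way, the entry $(A^m)_{uv}$ is determined by $v-u$ alone, which is exactly the Toeplitz property.

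Since the heavy lifting is bundled into Theorem~\ref{lem:pqr}, no genuine obstacle arises at this stage. The only subtlety worth flagging is that both $Q_m$ and $R_m$ are defined purely in terms of differences of vertex labels, which is precisely what allows the dichotomy above to transfer uniformly across all pairs sharing a common difference; this is built into the definitions and requires no additional argument beyond citing Theorem~\ref{lem:pqr} and Lemma~\ref{lem:cwcorr}.
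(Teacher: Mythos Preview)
Your proposal is correct and follows essentially the same approach as the paper's proof: both invoke Theorem~\ref{lem:pqr} to obtain $M$ with $P_m=Q_m=R_m$ for all $m\ge M$, then argue that the $(u,v)$-entry of $A^m$ depends only on $v-u$ by passing through $R_m$. The only cosmetic difference is that the paper routes through $P_m$ (using Lemma~\ref{lem:cwcorr} together with Proposition~\ref{lem:xiperiod} to place $v-u$ in $P_m$), whereas you route through $Q_m$ (using only Lemma~\ref{lem:cwcorr}), which is marginally more direct but otherwise identical in spirit.
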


\begin{proof}
By Theorem~\ref{lem:pqr}, there is a positive integer $M$ such that $P_i = R_i$ for any integer $m \ge M$.
Suppose that $(u,v)$-entry of $A^m$ is $1$ for some integer $m \ge M$.
Then there exists directed $(u,v)$-walk of length $m$ in the digraph $D$ of $A$.
Therefore $v-u \equiv ms_1 \pmod d$ by Lemma~\ref{lem:cwcorr} and Proposition~\ref{lem:xiperiod}.
Thus $v - u \in P_m$.
Since $P_m = R_m$, $v-u \in R_m$.
Then, there is a directed $(u',v')$-walk of length $m$ in $D$ for any $v'-u' = v-u$.
Therefore the diagonal containing $(u,v)$ consists of ones.
Since $(u,v)$ was arbitrarily chosen, $A^m$ is Toeplitz.
\end{proof}

\section{Building specific directed walks in the digraph of $T_n\langle S;T \rangle$}
\label{sec:walks}

For notational convenience, we call an arc $u \to v$ of $D$ an {\it $s_i$-arc} (resp.\ {\it $t_j$-arc})
 if $v - u = s_i$ (resp.\ $u-v = t_j$) for $i \in [k_1]$ (resp.\ $j \in [k_2]$).

This section is devoted to building a directed $(u,v)$-walk having
a designated number of $s_i$-arcs for each $i = 2, 3, \ldots, k_1$ and a designated number of $t_j$-arcs for each $j = 2, 3, \ldots, k_2$.
In addition, we give an upper bound for the competition index of a Toeplitz matrix $T_n\langle S;T \rangle$ with $s_{k_1} \le n-t_1$, $t_{k_2} \le n-s_1$.

To initiate Theorem~\ref{lem:walk}, we take the digraph $D$ of $T_8 \langle 1,4 ;2,5 \rangle$ given in Figure~\ref{fig:t81245} and consider the vertex $7$.
We wish to construct a directed walk from $7$ containing exactly five arcs $(u,v)$ with $v-u=4$ and exactly six arcs $(u,v)$ with $u-v=5$ (it may contain arcs $(u,v)$ with $v-u=1$ or $u-v=2$ as many as is desired).
We begin with a directed walk $W_0 = 7$.
Using arcs $(u,v)$ with $u-v=2$, we reach the vertex $1$, which is the smallest vertex that can be reached from $7$ by using arcs $(u,v)$ with $u-v=2$.
It can be done by the directed path $W_1 = 7 \rightarrow 5 \rightarrow 3 \rightarrow 1$.
Now we add an $(u,v)$ arc with $v-u=4$ to have $W_2: = W_1 \rightarrow 5$.
To realize another $(u,v)$ arc with $v-u = 4$, we take $W_3 := W_2 \rightarrow 3\rightarrow 1$ by using arcs $(u,v)$ with $u-v=2$ and add the arc $(1,5)$ at the end.
We repeat this process until we achieve five arcs $(u,v)$ with $v-u= 4$.
We observe that this procedure works because $s_2+t_1 = 4+2 \le 8 = n$.
By repeatedly reaching the vertex $8$, which is the largest vertex that can be reached from arcs $(u,v)$ with $v-u = 1$, by going through a similar process as above,
we may eventually obtain a desired directed walk.
This idea is generalized to Theorem~\ref{lem:walk}.

\begin{lemma}[\cite{compToep}]\label{lem:edgecond}
Let $D$ be the digraph of $T_n \langle S;T\rangle$ and take vertices $u$ and $b$ in $D$ with $u<v$.
Then $u$ and $v$ are adjacent in $C(D)$ if and only if one of the following is true:
\begin{numcases}
{v-u=}
s_i-s_j &  for some  $s_i \in S \cap [n-u]$ and  $s_j \in S \cap [n-v]$;  \label{eq:case1}\\
t_i-t_j & for some $t_i \in T_{k_2} \cap [v-1]$ and  $t_j \in T_{k_2} \cap  [u-1]$;\label{eq:case3} \\
s_i+t_j & for some $s_i \in S$ and $t_j \in T_{k_2}$.  \label{eq:case2}
\end{numcases}
\end{lemma}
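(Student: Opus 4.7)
The plan is to unfold the definition directly: $\{u,v\}$ is an edge of $C(D)$ precisely when $u$ and $v$ share a common out-neighbor $w$ in $D$. Since $0\notin S\cup T$, the digraph $D$ has no loops, so any such $w$ satisfies $w\notin\{u,v\}$. Thus exactly one of the three relative positions $w>v$, $w<u$, or $u<w<v$ occurs, and I will show these three cases correspond respectively to the conditions \eref{eq:case1}, \eref{eq:case3}, and \eref{eq:case2} of the lemma.

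For the forward direction, assume $w$ is a common out-neighbor of $u$ and $v$. If $w>v$, both arcs $(u,w)$ and $(v,w)$ point forward, so $w-u\in S$ and $w-v\in S$; writing $s_i=w-u$ and $s_j=w-v$ yields $v-u=s_i-s_j$, and the requirement $w\le n$ translates to $s_i\le n-u$ and $s_j\le n-v$, producing \eref{eq:case1}. If $w<u$, both arcs point backward, giving $u-w,\,v-w\in T$; setting $t_j=u-w$ and $t_i=v-w$ yields $v-u=t_i-t_j$, while $w\ge 1$ forces $t_i\le v-1$ and $t_j\le u-1$, producing \eref{eq:case3}. If $u<w<v$, then $w-u=s_i\in S$ and $v-w=t_j\in T$, so $v-u=s_i+t_j$ as in \eref{eq:case2}.

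For the reverse direction I construct an explicit witness $w$ in each case: in \eref{eq:case1} set $w=u+s_i=v+s_j$, and the hypotheses $s_i\le n-u$, $s_j\le n-v$ force $w\in[n]$; in \eref{eq:case3} set $w=u-t_j=v-t_i$, and the hypotheses $t_j\le u-1$, $t_i\le v-1$ force $w\ge 1$; in \eref{eq:case2} set $w=u+s_i=v-t_j$, which lies strictly between $u$ and $v$ since $s_i,t_j\ge 1$ and therefore lies in $[n]$. In every case $w$ is a common out-neighbor of $u$ and $v$, so $\{u,v\}\in E(C(D))$. The only obstacle is the bookkeeping needed to convert ``$1\le w\le n$'' into the precise side conditions on $s_i,s_j,t_i,t_j$ stated in the lemma; there is no combinatorial subtlety beyond the three-way case split on the signs of $w-u$ and $w-v$.
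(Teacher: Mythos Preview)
Your argument is correct. Note, however, that the paper does not supply its own proof of this lemma: it is quoted from \cite{compToep}, so there is no in-paper proof to compare against. Your direct unfolding of the definition of $C(D)$ via the three-way case split on the position of the common out-neighbor $w$ relative to $u$ and $v$ is the natural argument, and the bookkeeping translating $1\le w\le n$ into the stated range constraints on $s_i,s_j,t_i,t_j$ is handled correctly in both directions.
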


\begin{lemma}[\cite{exp}]\label{lem:divis} Let $S$ and $T$ be nonempty subsets of $[n-1]$ with $n \ge \max S + \max T$.
For an integer $m\in [n]$ and any nonzero sequences
$(a_{i,m})_{i=1}^{k_1}$ and $(b_{i,m})_{i=1}^{k_2}$ of nonnegative integers  satisfying
\begin{equation} 1 \le m+ \sum_{i=1}^{k_1} a_{i,m}s_{i} -
\sum_{j=1}^{k_2} b_{j,m} t_{j} \le n, \label{eq:mrange}\end{equation}
there is an integer sequence  of length $\sum_{i=1}^{k_1}a_{i,m}+
\sum_{j=1}^{k_2}b_{j,m}$ satisfying the following:
\begin{itemize}
\item each term equals $s_i$ or $t_j$ for some $i \in [k_1]$ and $j \in [k_2]$ and $s_i$ and $t_j$ appear $a_{i,m}$ times and $b_{j,m}$ times, respectively, for each $i \in [k_1]$ and $j \in [k_2]$;
\item its $k$th partial sum is between $1-m$ and $n-m$ for any $1 \le k \le \sum_{i=1}^{k_1}a_{i,m}+
\sum_{j=1}^{k_2}b_{j,m}$.
\end{itemize}
\end{lemma}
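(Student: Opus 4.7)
The plan is to construct the required sequence greedily. Set $N := \sum_{i=1}^{k_1} a_{i,m} + \sum_{j=1}^{k_2} b_{j,m}$ and let $p_0 := m$ and $p_N := m + \sum_{i=1}^{k_1} a_{i,m} s_i - \sum_{j=1}^{k_2} b_{j,m} t_j$; by hypothesis, both lie in $[1,n]$. Starting with remaining counts $(a'_i) := (a_{i,m})$, $(b'_j) := (b_{j,m})$, at step $k$ I will choose a next term $x_{k+1} \in \{s_i : a'_i > 0\} \cup \{-t_j : b'_j > 0\}$, decrement the corresponding count, set $p_{k+1} := p_k + x_{k+1}$, and argue inductively that $p_{k+1} \in [1,n]$. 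Once this is done, the output sequence $(x_1, \ldots, x_N)$ uses each $s_i$ exactly $a_{i,m}$ times and each $t_j$ exactly $b_{j,m}$ times, and its $k$th partial sum equals $p_k - m \in [1-m, n-m]$, as required.

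The feasibility argument splits into three cases. First, if every remaining count is of one type, say only $+s_i$ terms remain (all $b'_j = 0$), then the final position satisfies $p_k + \sum_{i} a'_i s_i = p_N \le n$; picking any $i^\star$ with $a'_{i^\star} > 0$ gives $p_k + s_{i^\star} \le p_k + \sum_i a'_i s_i \le n$, and the lower bound $p_k + s_{i^\star} \ge 1$ is automatic from $p_k \ge 1$. The case in which only $-t_j$ terms remain is symmetric, using $p_k - t_{j^\star} \ge p_N \ge 1$.

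The main case is when some $a'_i > 0$ and some $b'_j > 0$ both remain. Fix any such pair $(i,j)$. Suppose for contradiction that neither $+s_i$ nor $-t_j$ is feasible at position $p_k$. Then $p_k + s_i > n$ and $p_k - t_j < 1$, i.e.\ $n - s_i < p_k < 1 + t_j$, and since all quantities are integers this forces $s_i + t_j \ge n+1$, contradicting the standing hypothesis $\max S + \max T \le n$. Hence at least one of the moves keeps us in $[1,n]$, and we select it.

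The only real obstacle is the last case, and it is handled precisely by the assumption $n \ge \max S + \max T$; everything else is bookkeeping. An induction on $N$ (or equivalently, iterating the above step-by-step choice $N$ times) now produces the desired sequence, completing the proof.
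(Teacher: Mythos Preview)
Your greedy argument is correct. The invariant $p_k\in[1,n]$ is maintained at every step: when moves of both signs are still available, the hypothesis $\max S+\max T\le n$ rules out the possibility that $p_k+s_i>n$ and $p_k-t_j<1$ simultaneously (exactly as you argue), and when only one sign remains the monotone comparison with the terminal value $p_N\in[1,n]$ settles it. One small remark: the statement as written in the paper says each term ``equals $s_i$ or $t_j$,'' but for the partial-sum condition to make sense the $t_j$-terms must enter with a minus sign; your interpretation $x_{k+1}\in\{s_i\}\cup\{-t_j\}$ is the intended one.

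As for comparison with the paper: this lemma is quoted from~\cite{exp} and the present paper does not reproduce a proof, so there is nothing here to compare against directly. Your argument is the standard greedy/one-step-at-a-time construction for such walk-building lemmas and is almost certainly the same idea as in the original source.
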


The original statement of Lemma~\ref{lem:divis} includes the condition that $S$ and $T$ are disjoint.
However, the condition was not used in the proof of Lemma~\ref{lem:divis} and is deleted in the current statement.

\begin{theorem}\label{lem:walk}
Let $D$ be the digraph of $T_n\langle S;T\rangle$ with
$\max S + \min T \le n$ and $\min S + \max S \le n$. Suppose that a vertex $v$ of $D$ and nonnegative integers $a_2, \ldots, a_{k_1}, b_2, \ldots, b_{k_2}$ are given. Then the following are true:
\begin{itemize}
\item[(a)] There is a directed walk $W$ of length $\ell$ starting from $v$ and containing exactly $a_i$ $s_i$-arcs and $b_j$ $t_j$-arcs for each $2 \le i \le k_1$ and $2\le j\le k_2$ where
\[
\ell \le (a_2+\ldots +a_{k_1}+b_2+\ldots+b_{k_2})\left(\max\left\{\left\lceil\frac{t_{k_2}}{s_1}\right\rceil,\left\lceil\frac{s_{k_1}}{t_1}\right\rceil\right\}+1\right).
\]
\item[(b)]Given $W$ in part (a), if $a \ge a_1$ and $b \ge b_1$ satisfy the inequalities
\[
1 \le v+ as_1 + \sum_{i=2}^{k_1} a_is_i- bt_1- \sum_{j=2}^{k_2}b_jt_j \le n,
\]where $a_1$ and $b_1$ are the numbers of $s_1$-arcs and $t_1$-arcs, respectively, in $W$.
Then there is a directed walk  starting from $v$ in $D$ containing
exactly $a$ $s_1$-arcs, $b$ $t_1$-arcs, $a_i$ $s_i$-arcs and $b_j$ $t_j$-arcs for all $2\le i\le k_1$
and $2\le j \le k_2$.
\end{itemize}
\end{theorem}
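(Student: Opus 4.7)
The plan is to handle (a) by a greedy construction that intersperses the prescribed $s_i$- and $t_j$-arcs with $i,j \ge 2$ (I will call these \emph{non-trivial} arcs) with short runs of $s_1$- and $t_1$-arcs used purely for repositioning, and then to deduce (b) by appending more $s_1$- and $t_1$-arcs to the walk from (a) via Lemma~\ref{lem:divis} applied to the two-element subsystem $(\{s_1\},\{t_1\})$.

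For (a), I will order the non-trivial arcs arbitrarily and process them one at a time, maintaining a current vertex $u$ (initially $u=v$) in $[1,n]$. When the next arc to append is an $s_i$-arc with $i \ge 2$, appending it directly is legal precisely when $u \le n-s_i$; if not, I take $t_1$-arcs downward and choose $k$ as the least nonnegative integer with $u-kt_1 \le n-s_i$. The hypothesis $\max S+\min T \le n$ gives $s_i+t_1 \le n$, which guarantees both that each intermediate vertex $u-jt_1$ lies in $[1,n]$ and that $k \le \lceil s_i/t_1\rceil \le \lceil s_{k_1}/t_1\rceil$. The symmetric procedure for $t_j$-arcs ($j \ge 2$) uses at most $\lceil t_{k_2}/s_1\rceil$ climbing $s_1$-arcs and invokes $\min S+\max T \le n$ to stay $\le n$. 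Each non-trivial arc thus costs at most $\max\{\lceil s_{k_1}/t_1\rceil,\lceil t_{k_2}/s_1\rceil\}+1$ arcs, and summing yields the required bound on $\ell$.

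For (b), let $u_{\mathrm{end}}$ denote the final vertex of the walk $W$ produced in (a); counting arcs gives
\[
u_{\mathrm{end}} = v + a_1 s_1 + \sum_{i=2}^{k_1} a_i s_i - b_1 t_1 - \sum_{j=2}^{k_2} b_j t_j,
\]
so the range hypothesis on $a,b$ translates into exactly $1 \le u_{\mathrm{end}} + (a-a_1)s_1 - (b-b_1)t_1 \le n$. Since $s_1+t_1 \le s_{k_1}+t_1 \le n$, the subsystem $(\{s_1\},\{t_1\})$ satisfies the hypothesis of Lemma~\ref{lem:divis}, which I invoke with starting vertex $u_{\mathrm{end}}$ and multiplicities $(a-a_1,b-b_1)$ to obtain an ordering of the extra arcs whose partial sums keep the vertex in $[1,n]$. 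Appending this ordering to $W$ produces the walk required in (b); the case $a=a_1$, $b=b_1$ is immediate.

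The main obstacle I anticipate is the bookkeeping inside (a): at every step one must simultaneously certify that the positioning $s_1$- or $t_1$-arcs are themselves legal and that the repositioning ends at a vertex from which the next non-trivial arc can be taken, all while matching the tight count $\max\{\lceil s_{k_1}/t_1\rceil,\lceil t_{k_2}/s_1\rceil\}+1$. The two sum hypotheses $\max S+\min T \le n$ and $\min S+\max T \le n$ are tailored precisely to this double requirement, so the proof should reduce to writing this interplay out carefully rather than to any deeper combinatorics.
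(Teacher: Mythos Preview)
Your proposal is correct and follows essentially the same approach as the paper. The paper phrases part (a) as an induction on $m=a_2+\cdots+a_{k_1}+b_2+\cdots+b_{k_2}$, peeling off one non-trivial arc at a time and using exactly your repositioning step (descend by $t_1$-arcs until the next $s_p$-arc becomes legal, with the same bound $r_0\le\lceil s_{k_1}/t_1\rceil$ coming from $s_p+t_1\le n$), while your iterative greedy description is the same argument unrolled; part (b) is identical in both, invoking Lemma~\ref{lem:divis} on the subsystem $T_n\langle s_1;t_1\rangle$ at the terminus of $W$.
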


\begin{proof}
We prove (a) by induction on $m:=a_2+\cdots+a_{k_1}+b_2+\cdots+b_{k_2}$
for some nonnegative integers $a_2,\ldots,a_{k_1},b_2,\ldots,b_{k_2}$.
If $m=0$, then the walk $v$ contains neither $s_i$-arcs nor $t_j$-arcs
for each $2 \le i \le k_1$ and $2 \le j \le k_2$.
Suppose $m\ge1$ and assume that the statement is true for $m-1$.

Now take nonnegative integers  $a_2,\ldots,a_{k_1},b_2,\ldots,b_{k_2}$ with
$a_2+\cdots+a_{k_1}+b_2+\cdots+b_{k_2}=m$.
Since $m \ge 1$, there is a positive integer $p$ or $q$ such that $a_p \ge 1$ or $b_q \ge 1$.
By symmetry, we may assume that $a_p \ge 1$.
Since $a_2+\cdots+a_{p-1}+(a_p-1)+a_{p+1}+\cdots+a_{k_1}+b_2+\cdots+b_{k_2}=m-1$,
by the induction hypothesis, there is a directed walk $W$ of length $\ell$ starting from $v$ such that \[\ell \le (m-1)\left(\max\left\{\left\lceil\frac{t_{k_2}}{s_1}\right\rceil,\left\lceil\frac{s_{k_1}}{t_1}\right\rceil\right\}+1\right)\]
 and $W$ contains exactly $a_p-1$ $s_p$-arcs, $a_i$ $s_i$-arcs, and $b_j$ $t_j$-arcs for
$2 \le i \le k_1$, $i \ne p$, $2 \le j \le k_2$.

Let $w$ be the terminus of $W$.
Then there exists a nonnegative integer $r$ such that $1 \le w -rt_1 \le t_1$.
Since $s_p \le s_{k_1}\le n-t_1$, we have $1\le w-rt_1 \le n-s_p$.
Now, let $r_0$ be the smallest one satisfying $1 \le w-r_0t_1 \le n-s_p$.
Then \[r_0 = \max\left\{0, \left\lceil \frac{w-n+s_p}{t_1}\right\rceil\right\} \le \left\lceil \frac{s_{k_1}}{t_1}\right\rceil.\]
Moreover, $W_1 := w \rightarrow (w-t_1) \rightarrow \cdots \rightarrow (w-r_0 t_1)$ is
a directed walk from $w$ to $w-r_0 t_1$ in $D$ consisting of only $r_0$ $t_1$-arcs.
Since $w-r_0 t_1\le n-s_p$, $w-r_0t_1+s_p \le n$ and so $w-r_0t_1+s_p$ is a vertex in $D$.
Then  $(w-r_0 t_1) \rightarrow (w-r_0 t_1 +s_p)$ in $D$, so the directed walk $W \rightarrow W_1 \rightarrow (w-r_0t_1+s_p)$ is a desired one starting from $v$
containing exactly $a_i$ $s_i$-arcs and $b_j$ $t_j$-arcs for $2 \le i \le k_1$ and
$2 \le j \le k_2$ and of length
\begin{align*}
\ell + r_0 +1 &\le (m-1)\left(\max\left\{\left\lceil\frac{t_{k_2}}{s_1}\right\rceil,\left\lceil\frac{s_{k_1}}{t_1}\right\rceil\right\}+1\right) + \left\lceil \frac{s_{k_1}}{t_1}\right\rceil +1 \\
&\le m \left(\max\left\{\left\lceil\frac{t_{k_2}}{s_1}\right\rceil,\left\lceil\frac{s_{k_1}}{t_1}\right\rceil\right\}+1\right). \end{align*}

To show (b), suppose $a \ge a_1$, $b \ge b_1$, and
\[ 1 \le
v+ as_1 + \sum_{i=2}^{k_1} a_is_i - bt_1 - \sum_{j=2}^{k_2} b_jt_j \le n.
\]
We note that the terminus of $W$ is
\[v+\sum_{i=1}^{k_1}a_is_i-\sum_{j=1}^{k_2}b_jt_j.\]
Then a desired directed walk can be obtained by attaching to $W$ a directed walk $W'$  containing $a-a_1$ $s_1$-arcs and $b-b_1$ $t_1$-arcs.
By applying Lemma~\ref{lem:divis} to $T_n \langle s_1;t_1 \rangle$ and $v+\sum_{i=1}^{k_1} a_is_i -  \sum_{j=1}^{k_2}b_jt_j$, we may obtain $W'$.
\end{proof}

\begin{remark} \label{rem:most}
If $s_{k_1} > n-t_1$ or $t_{k_2} > n-s_1$, there is no guarantee that there is a directed walk fulfilling the condition given in Theorem~\ref{lem:walk}(a).
For example, the digraph of $T_6 \langle 2, 4; 4,5 \rangle$ has two strong components with vertex sets $\{1,3,5\}$ and $\{2,4,6\}$ without arcs going from $\{1,3,5\}$ to $\{2,4,6\}$ and $(u,v)=(6,1)$ is the only arc with $u-v=5$ in $D$ (see Figure~\ref{fig:t62445}).
Therefore there is no directed walk containing arc $(u,v)$ with $u-v = 5$ starting from $1$.
\end{remark}

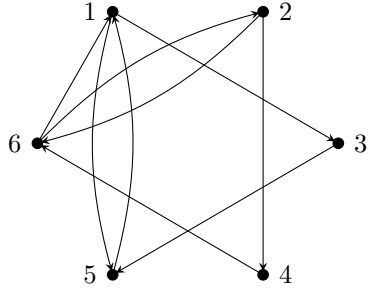
\begin{figure}\begin{center}
\begin{tikzpicture}[baseline={(0,3)}]
\coordinate (1) at (-1, 4.75);
\coordinate (2) at (1, 4.75);
\coordinate (3) at (2, 3);
\coordinate (4) at (1, 1.25);
\coordinate (5) at (-1, 1.25);
\coordinate (6) at (-2, 3);

\draw [ ->, shorten <= 0.05cm, shorten >= 0.05cm, >=stealth] (1) to (3);
\draw [ ->, shorten <= 0.05cm, shorten >= 0.05cm, >=stealth] (5) to[out=75,in=-75] (1);
\draw [ ->, shorten <= 0.05cm, shorten >= 0.05cm, >=stealth] (1) to[out=-105,in=105] (5);
\draw [ ->, shorten <= 0.05cm, shorten >= 0.05cm, >=stealth] (3) to (5);
\draw [ ->, shorten <= 0.05cm, shorten >= 0.05cm, >=stealth] (2) to (4);
\draw [ ->, shorten <= 0.05cm, shorten >= 0.05cm, >=stealth] (4) to (6);
\draw [ ->, shorten <= 0.05cm, shorten >= 0.05cm, >=stealth] (2) to[out=-135,in=15] (6);
\draw [ ->, shorten <= 0.05cm, shorten >= 0.05cm, >=stealth] (6) to[out=45,in=-165] (2);
\draw [ ->, shorten <= 0.05cm, shorten >= 0.05cm, >=stealth] (6) to (1);

\filldraw (-1,4.75) circle (2pt);
\draw (-1.3,4.75) node{\small $1$};
\filldraw (1,4.75) circle (2pt);
\draw (1.3,4.75) node{\small $2$};
\filldraw (2,3) circle (2pt);
\draw (2.3,3) node{\small $3$};
\filldraw (1,1.25) circle (2pt);
\draw (1.3,1.25) node{\small $4$};
\filldraw (-1,1.25) circle (2pt);
\draw (-1.3,1.25) node{\small $5$};
\filldraw (-2,3) circle (2pt);
\draw (-2.3,3) node{\small $6$};

\end{tikzpicture}
\caption{The digraph of a Toeplitz matrix $T_6\langle 2,4;4,5\rangle$.}\label{fig:t62445}\end{center}
\end{figure}

We present an upper bound for the competition index of a  Toeplitz matrix $T_n\langle S;T \rangle$ with $s_{k_1} \le n-t_1$, $t_{k_2} \le n-s_1$.

\begin{theorem} \label{thm:compindex}
Let $A= T_n\langle S;T \rangle$ with $\max S + \min T \le n$, $\min S + \max S \le n$, $d = \gcd(S+T)$, and $d' = \gcd (d,s_1)$.
If the principal submatrix of $AA^T$ determined by the rows and columns indexed by $\{v \in [n] \mid v \equiv i \pmod d\}$ is irreducible for each $1 \le i \le d$,
then the competition index of $A$ is at most \[2\left( \left\lceil n/d \right\rceil -1 \right) \left( \max\left\{\left\lceil\frac{t_{k_2}}{s_1}\right\rceil,\left\lceil\frac{s_{k_1}}{t_1}\right\rceil\right\} +1 \right)+2(s_1+t_1).\]
\end{theorem}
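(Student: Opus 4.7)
The plan is to combine Theorem~\ref{thm:exp}, the irreducibility hypothesis, and Theorem~\ref{lem:walk} to produce explicit witnessing walks. By Theorem~\ref{thm:exp} the competition period of $A$ equals $1$ and $\{C^m(D)\}_{m=1}^\infty$ converges to the disjoint union of cliques on the classes $V_i := \{v \in [n] \mid v \equiv i \pmod d\}$ for $i = 1, \ldots, d$. Hence the competition index of $A$ is the smallest $q$ for which $C^q(D)$ already equals this limit, and, in view of Proposition~\ref{prop:adj}, it suffices to show that every pair $u, v$ with $u \equiv v \pmod d$ is adjacent in $C^m(D)$ for every $m$ at least the claimed bound. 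A useful preliminary: since $s_1 + t_1 \le \min S + \max T \le n$, every vertex of $D$ has out-degree at least one, so from any vertex there is an outgoing walk of any positive length. Consequently, if $u, v$ are adjacent in $C^{m_0}(D)$ via a common witness $w$, then extending the two witnessing walks by a common outgoing walk of length $m - m_0$ from $w$ shows $u, v$ adjacent in $C^m(D)$ for all $m \ge m_0$. It therefore suffices to exhibit a witness of adjacency at some level $m_0(u,v)$ bounded by the claimed quantity.

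To bound $m_0(u,v)$ I would invoke the irreducibility hypothesis. The principal submatrix of $AA^T$ indexed by $V_i$ is the adjacency matrix of the induced subgraph $H_i$ of the competition graph $C(D)$ on $V_i$, and by hypothesis $H_i$ is connected. Hence any $u, v \in V_i$ are joined in $H_i$ by a path $u = w_0, w_1, \ldots, w_\ell = v$ with $\ell \le |V_i| - 1 \le \lceil n/d \rceil - 1$, and each edge $\{w_j, w_{j+1}\}$ is witnessed by a common out-neighbor $x_j$ in $D$, meaning $(w_j, x_j)$ and $(w_{j+1}, x_j)$ are arcs. By Proposition~\ref{lem:xiperiod} all of the $x_j$ lie in the same residue class $V_{i'}$ with $i' \equiv i + s_1 \pmod d$.

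The main construction is an induction on $j$: the proposed inductive claim is that for each $j = 0, 1, \ldots, \ell$ there exist a target vertex $z_j$ and walks of a common length $L_j$ from $u$ and from $w_j$ to $z_j$ with
\[
L_j \;\le\; 2j\bigl(\max\{\lceil t_{k_2}/s_1\rceil,\lceil s_{k_1}/t_1\rceil\} + 1\bigr) + 2(s_1+t_1).
\]
The base case $j = 0$ is immediate (take $z_0 = u$ and $L_0 = 0$). For the inductive step, Theorem~\ref{lem:walk}(a) is used to build an extension from $w_{j+1} \to x_j$ that introduces the higher-index arcs needed to bridge the residue gap to a new common target $z_{j+1}$, at a cost of at most $\max\{\lceil t_{k_2}/s_1\rceil,\lceil s_{k_1}/t_1\rceil\}+1$ in length, and similarly for the extension of the walk from $u$; Theorem~\ref{lem:walk}(b) then pads both walks with $s_1$- and $t_1$-arcs to a common total length $L_{j+1} = L_j + 2(\max\{\lceil t_{k_2}/s_1\rceil,\lceil s_{k_1}/t_1\rceil\}+1)$. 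The additive $2(s_1+t_1)$ in the bound is the slack needed to ensure that the endpoints used when invoking Theorem~\ref{lem:walk}(b) lie in $[n]$. Taking $j = \ell \le \lceil n/d \rceil - 1$ produces walks of the required bounded length from $u$ and $v$ to a common vertex, finishing the proof.

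The main obstacle, as I see it, is the inductive step itself: coordinating the counts of $s_1$- and $t_1$-arcs on each side (and the choice of $z_{j+1}$) so that the two walks end at the same vertex with the same length, while satisfying the $[n]$-range condition of Theorem~\ref{lem:walk}(b). The congruence bookkeeping via Proposition~\ref{lem:xiperiod} makes the target residue classes compatible, and the $2(s_1+t_1)$ slack in the bound is exactly what is needed to freely shift the common target by up to $s_1$ forward or $t_1$ backward so that both walks can be padded appropriately without leaving $[n]$.
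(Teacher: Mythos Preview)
Your preliminary reductions are correct and match the paper: the monotonicity of $C^m(D)$ via walk extension (using that every vertex has an out-neighbor), and the use of the irreducibility hypothesis to obtain a path $u=w_0,w_1,\ldots,w_\ell=v$ in $C(D)$ with $\ell\le\lceil n/d\rceil-1$, are exactly how the paper begins.

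The divergence, and the gap, is in converting this path into two equal-length walks from $u$ and $v$ to a common vertex. The paper does \emph{not} proceed by induction on $j$. Instead it records, for each competition edge $\{w_j,w_{j+1}\}$ with witness $x_j$, the type of the arc $(w_j,x_j)$ and of $(w_{j+1},x_j)$, and collects over all $j$ the total multiplicities $a_i,b_i$ (from the left endpoints) and $a'_i,b'_i$ (from the right endpoints). Telescoping then gives a single global identity
\[
u+\sum_i a_is_i-\sum_i b_it_i \;=\; v+\sum_i a'_is_i-\sum_i b'_it_i,
\]
together with $\sum a_i+\sum b_i=\sum a'_i+\sum b'_i=\ell$. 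One application of Theorem~\ref{lem:walk}(a) at $u$ (for the profile $a_2,\ldots,b_{k_2}$) and one at $v$ (for $a'_2,\ldots,b'_{k_2}$), followed by one padding step each via Theorem~\ref{lem:walk}(b), produces walks whose termini differ by a pure $s_1/t_1$ combination; a final application of Lemma~\ref{lem:divis} brings both to a common vertex. The $[n]$-range slack $s_1+t_1$ is therefore paid \emph{once per side}, yielding exactly the $2(s_1+t_1)$ in the bound.

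Your inductive scheme, by contrast, must invoke Theorem~\ref{lem:walk}(b) at each step $j\to j+1$, and each such invocation potentially needs its own range adjustment. You assert the $2(s_1+t_1)$ slack is global, but supply no mechanism: once you have walks from $u$ and from $w_j$ to $z_j$, there is no evident way to redirect the $u$-walk to a new target $z_{j+1}$ compatible with $w_{j+1}$ without re-padding---the existing walk ends at $z_j$, not at a vertex from which you control the next extension. As written, either the slack accumulates to order $\ell(s_1+t_1)$, which does not give the stated bound, or the inductive step requires an idea you have not provided (and which you yourself flag as ``the main obstacle''). The telescoping identity is precisely what lets one bypass the intermediate $w_j$'s entirely and pay the slack only once.
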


\begin{proof}
Let $D$ be the digraph of $A$
and take two distinct vertices $u$ and $v$ in $D$.
If $u \not\equiv v \pmod d$, then
$u$ and $v$ are not adjacent in $C^m(D)$ for any positive integer $m$ by Proposition~\ref{prop:adj}.

We denote by $G_i$ the subgraph of $C(D)$ induced by the vertex set $\{v \in [n] \mid v \equiv i \pmod d\}$ for $1 \le i \le d$.
Then $G_i$ is a component of $C(D)$ for any $1 \le i \le d$ by the hypothesis.

Now suppose $u \equiv v \pmod d$.
Then $u$ and $v$ belong to $G_j$ for some $j \in [d]$ and so there is a path from $u$ to $v$ in $C(D)$.
Take one of such paths and let $v_0v_1\cdots v_\ell$ be its sequence with $v_0 = u$ and $v_\ell = v$.
Then there is a common out-neighbor $w_i$ of $v_i$ and $v_{i+1}$ for $0 \le i \le \ell-1$.
We note that $w_i - v_i= s_j$ or $-t_k$ for some $j \in [k_1]$, $k \in [k_2]$.
 For each $1 \le i \le k_1$, let $a_i$ (resp.\ $a'_i$) be the number of $s_i$-arcs among $(v_0,w_0), \ldots, (v_{\ell-1},w_{\ell-1})$ (resp.\ $(v_1,w_0), \ldots, (v_\ell,w_{\ell-1})$), that is,
\[a_i=|\{j \mid w_j-v_j=s_i, 0 \le j \le \ell-1 \}| \quad \text{and} \quad
a'_i=|\{j \mid w_j-v_{j+1}=s_i, 0 \le j \le \ell-1\}|.\]
Similarly, for each $1 \le i \le k_2$, let
\[b_i=|\{j \mid w_j-v_j=-t_i, 0 \le j \le \ell-1\}| \quad \text{and} \quad
b'_i=|\{j \mid w_j-v_{j+1}=-t_i, 0 \le j \le \ell-1\}|.\]
Then
\begin{equation}\label{eq:length}
a_1+\cdots +a_{k_1} + b_1 +\cdots+b_{k_2}=a'_1 +\cdots+a'_{k_1} + b'_1 + \cdots + b'_{k_2} =\ell
\end{equation} and
\[v_\ell-v_0 =\sum_{i=0}^{\ell-1}(w_i-v_i)-\sum_{i=0}^{\ell-1} (w_i-v_{i+1})=
\left(\sum_{i=1}^{k_1} a_is_i-\sum_{i=1}^{k_2} b_it_i\right)-\left(\sum_{i=1}^{k_1} a'_is_i-\sum_{i=1}^{k_2} b'_it_i\right). \]
Thus
\begin{equation}\label{eq:uv}
v_0 + \sum_{i=1}^{k_1} a_is_i - \sum_{i=1}^{k_2} b_it_i = v_\ell + \sum_{i=1}^{k_1} a'_is_i -\sum_{i=1}^{k_2} b'_it_i.
\end{equation}

On the other hand, by Theorem~\ref{lem:walk}(a), there is a directed walk $W_u$ starting from $u$ such that $W_u$ contains exactly $a_i$ $s_i$-arcs and $b_j$ $t_j$-arcs for each $2 \le i \le k_1$ and $2\le j\le k_2$ with length at most \[
(a_2+\cdots +a_{k_1}+b_2+\cdots+b_{k_2})\left(\max\left\{\left\lceil\frac{t_{k_2}}{s_1}\right\rceil,\left\lceil\frac{s_{k_1}}{t_1}\right\rceil\right\}+1\right).\]
Let $a$ and $b$ be the numbers of $s_1$-arcs and $t_1$-arcs, respectively, on $W_u$.
Then there is a nonnegative integer $c$ such that $a+ct_1 \ge a_1$ and $b + cs_1 \ge b_1$.
Take $c$ be the smallest nonnegative integer among such integers.
Then $c = 0$ or $a+ct_1 < a_1+t_1$ or $b+cs_1 < b_1 + s_1$.
Let $a_1^* = a+ct_1$ and $b_1^* = b+cs_1$.
Then
\begin{align*}&u+a_1^* s_1+\sum_{i=2}^{k_1} a_is_i - b_1^* t_1-\sum_{i=2}^{k_2} b_it_i\\ & =u+(a_1^*-a) s_1+as_1+\sum_{i=2}^{k_1} a_is_i - (b_1^*-b) t_1-bt_1-\sum_{i=2}^{k_2} b_it_i \\
&=u+as_1+\sum_{i=2}^{k_1} a_is_i - bt_1-\sum_{i=1}^{k_2} b_it_i
\end{align*}
Then, by Theorem~\ref{lem:walk}(b), there is a directed walk $W^*_u$ starting from $u$ such that $W^*_u$ contains exactly $a_1^*$ $s_1$-arcs, $b_1^*$ $t_1$-arcs, $a_i$ $s_i$-arcs and $b_j$ $t_j$-arcs for each $2 \le i \le k_1$ and $2\le j\le k_2$ with length
\begin{equation}\label{eq:lu}
 \ell_u := a_1^*+a_2+\cdots+a_{k_1}+b_1^*+b_2+\cdots+b_{k_2}
\end{equation}
If $c= 0$, then $a_1^* = a$ and $b_1^* = b$, and so
\begin{equation}\label{ineq:lu1}
\ell_u = |W_u| \le (a_2+\cdots +a_{k_1}+b_2+\cdots+b_{k_2})\left(\max\left\{\left\lceil\frac{t_{k_2}}{s_1}\right\rceil,\left\lceil\frac{s_{k_1}}{t_1}\right\rceil\right\}+1\right).
\end{equation}
Suppose $a+ct_1 < a_1+t_1$.
Then $a_1^* - a_1 < t_1$ and $c \le \lceil \frac{p_1}{t_1}\rceil$.
Therefore
\begin{align*}
b_1^* - b_1 &\le b_1^* \\
&= b+cs_1\\
&\le a+b + s_1 \left\lceil \frac{a_1}{t_1} \right\rceil \\
&< (a_2+\cdots +a_{k_1}+b_2+\cdots+b_{k_2})\left(\max\left\{\left\lceil\frac{t_{k_2}}{s_1}\right\rceil,\left\lceil\frac{s_{k_1}}{t_1}\right\rceil\right\}\right)
+a_1 \frac{s_1}{t_1}+s_1 \\
& \le (a_1+\cdots+a_{k_1}+b_2+\cdots+b_{k_2})\left(\max\left\{\left\lceil\frac{t_{k_2}}{s_1}\right\rceil,\left\lceil\frac{s_{k_1}}{t_1}\right\rceil\right\}\right) +s_1.
\end{align*}
Thus
\begin{equation}\label{ineq:lu2}
\ell_u = \ell + (a_1^* -a_1) + (b_1^*-b_1) < \ell\left(\max\left\{\left\lceil\frac{t_{k_2}}{s_1}\right\rceil,\left\lceil\frac{s_{k_1}}{t_1}\right\rceil\right\}\right)+s_1+t_1.
\end{equation}
If $b+cs_1 < b_1 + s_1$, then $\ell_u < \ell\left(\max\left\{\left\lceil\frac{t_{k_2}}{s_1}\right\rceil,\left\lceil\frac{s_{k_1}}{t_1}\right\rceil\right\}\right)+s_1+t_1$ by a similar argument.
Hence, we have $\ell_u < \ell\left(\max\left\{\left\lceil\frac{t_{k_2}}{s_1}\right\rceil,\left\lceil\frac{s_{k_1}}{t_1}\right\rceil\right\}+1\right)+s_1+t_1$ for each cases.

Then $W^*_u$ has exactly $a_i$ $s_i$-arcs and $b_j$ $t_j$-arcs for $2 \le i \le k_1$ and $2 \le j \le k_2$.
Moreover, $W^*_u$ has $a_1^*$ $s_1$-arcs and $b_1^*$ $t_1$-arcs.

By Theorem~\ref{lem:walk}(b) again, one can make a directed walk $W^*_v$ starting from $v$ such that $W^*_v$ contains $a_1^{**} \ge a'_1$ $s_1$-arcs and $b_1^{**} \ge b'_1$ $t_1$-arcs and has length
\begin{align}
\ell_v&:= a_1^{**}+a'_2+\cdots+a'_{k_1}+b_1^{**}+b'_2+\cdots+b'_{k_2}\nonumber \\
&= \ell + (a_1^{**}-a'_1)+(b_1^{**}-b'_1)\label{eq:lv} \mbox{ (by \eqref{eq:length})}\\
&\le \ell\left(\max\left\{\left\lceil\frac{t_{k_2}}{s_1}\right\rceil,\left\lceil\frac{s_{k_1}}{t_1}\right\rceil\right\}+1\right)+s_1+t_1.\label{ineq:lv}
\end{align}

Let $z_u$ and $z_v$ be the termini of $W^*_u$ and $W^*_v$, respectively.
Then
\[
z_u = u+ a_1^* s_1 + a_2s_2+ \cdots + a_{k_1}s_{k_1} -b_1^* t_1 -b_2t_2 - \cdots - b_{k_2}t_{k_2}\]
and
\[
z_v = v+a_1^{**}s_1+a'_2s_2 +\cdots + a'_{k_1}s_{k_1} - b_1^{**} t_1 - b_2t_2 -\cdots - b_{k_2}t_{k_2}.
\]
Thus, by \eqref{eq:uv},
\[
z_u-z_v = ((a_1^* -a_1)-(a_1^{**} -a'_1))s_1 - ((b_1^* -b_1)-(b_1^{**} -b'_1))t_1.
\]
Therefore
\[
z_u + (a_1^{**}-a'_1)s_1 - (b_1^{**}-b'_1)t_1 = z_v + (a_1^* - a_1)s_1 - (b_1^* - b_1)t_1=:z.
\]
Then, by Lemma~\ref{lem:divis}, there exist a directed walk $W'_u$ from $z_u$ to $z$ of length $a_1^{**} -a'_1 + b_1^{**}-b'_1$ and a directed walk $W'_v$ from $z_v$ to $z$ of length $a_1^*-a_1 + b_1^*-b_1$ in $T_n \langle s_1 ; t_1 \rangle$.
Since $T_n \langle s_1; t_1 \rangle$ is a subdigraph of $D$, $W'_u$ and $W'_v$ are also directed walks in $D$.
Thus $W^*_u\rightarrow W'_u$ is a $(u,z)$-directed walk of length $\ell_u+(a_1^{**}-a'_1)+(b_1^{**}-b'_1)$ and $W^*_v\rightarrow W'_v$ is a $(v,z)$-directed walk of length $\ell_v+(a_1^*-a_1)+(b_1^*-b_1)$ in $D$.
By \eqref{eq:lu} and \eqref{eq:lv},
\begin{align*}
\ell_u+(a_1^{**}-a'_1)+(b_1^{**}-b'_1) &= m+ (a_1^*-a_1)+(b_1^*-b_1)+(a_1^{**}-a'_1)+(b_1^{**}-b'_1) \\
&= \ell_v + (a_1^*-a_1)+(b_1^*-b_1).
\end{align*}
Moreover, by \eqref{ineq:lu2} and \eqref{ineq:lv},
\begin{align*}
\ell_u+(a_1^{**}-a'_1)+(b_1^{**}-b'_1) &\le
\ell_u+\ell_v \\
&\le 2 \ell \left(\max\left\{\left\lceil\frac{t_{k_2}}{s_1}\right\rceil,\left\lceil\frac{s_{k_1}}{t_1}\right\rceil\right\}+1\right)+2(s_1+t_1).
\end{align*}
Since there is at most $\left\lceil n/d \right\rceil$ vertices in $G_i$ for each $1 \le i \le d$, we have $\ell \le \left\lceil n/d\right\rceil -1$.
Thus $W^*_u\rightarrow W'_u$ and $W^*_v \rightarrow W'_v$ has the same length at most
\[
2(\left\lceil n/d \right\rceil-1) \left(\max\left\{\left\lceil\frac{t_{k_2}}{s_1}\right\rceil,\left\lceil\frac{s_{k_1}}{t_1}\right\rceil\right\}+1\right)+2(s_1+t_1).
\]
Hence we have shown that the competition index of $A$ is at most
\[
2\left( \left\lceil n/d \right\rceil -1 \right) \left( \max\left\{\left\lceil\frac{t_{k_2}}{s_1}\right\rceil,\left\lceil\frac{s_{k_1}}{t_1}\right\rceil\right\} +1 \right)+2(s_1+t_1). \qedhere
\]
\end{proof}

\section{A Proof of Theorem~\ref{lem:pqr}}\label{se:pf}

\subsection{$(\exists m^* \in \mathbb{N}) [P_{m^*} \subseteq Q_{m^*}]$}

We will show that there is a positive integer $m^\star$ such that $P_{m^\star} \subseteq Q_{m^\star}$.
Prior to that, we take number theoretic approach to deduce the following two results.

B\'ezout's identity  asserts that if ${\rm gcd}(a,b)=d$ then there exist integers $x$ and $y$ such that $ax + by = d$.
Thus, if $d={\rm gcd}(S+T)$ then by Proposition~\ref{prop:gcd} together with B\'ezout's identity there exit some integers $\alpha_{i,j}, \beta_{i,j}, \gamma_{i,j}$ such that
\begin{eqnarray}\label{d}
d = \sum_{1 \le i<j \le k_1} \alpha_{i,j}(s_j-s_i) + \sum_{1 \le i < j \le k_2} \beta_{i,j}(t_j-t_i) + \sum_{1 \le i \le k_1, 1\le j \le k_2} \gamma_{i,j}(s_i+t_j).
\end{eqnarray}

\begin{lemma}\label{prop:gcd-1} Let $d={\rm gcd}(S+T)$. Then there exist integers $a_i,b_i$ such that
\begin{equation*}
d= \sum_{i = 1}^{k_1} a_i s_i
  - \sum_{i = 1}^{k_2}b_i t_i
\quad\text{ and }\quad\sum_{i = 1}^{k_1}a_i
                   +\sum_{i = 1}^{k_2}b_i = 0.
\end{equation*}
\end{lemma}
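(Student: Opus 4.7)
The plan is to take the B\'ezout-type expression \eqref{d} and observe that each of its three kinds of generating summands already has the property we want: namely, each of $s_j - s_i$, $t_j - t_i$, and $s_i + t_j$ can be written in the form $\sum_{i} a_i s_i - \sum_{i} b_i t_i$ with $\sum_i a_i + \sum_i b_i = 0$. Since the class of such "coefficient-sum-zero" expressions is obviously closed under integer linear combinations, multiplying each generator by its coefficient $\alpha_{i,j}$, $\beta_{i,j}$, or $\gamma_{i,j}$ from \eqref{d} and summing yields $d$ in precisely the desired form.

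Concretely, I would check the three cases in one line each. For $s_j - s_i$, take $a_j = 1$, $a_i = -1$, and all other $a_k, b_k$ equal to $0$; the sum of the coefficients is $1 + (-1) = 0$. For $t_j - t_i$, which must be written as $-(b_j t_j + b_i t_i)$, take $b_j = -1$, $b_i = 1$, and all $a_k = 0$; again the sum is $0$. For $s_i + t_j = 1 \cdot s_i - (-1) \cdot t_j$, take $a_i = 1$, $b_j = -1$; once more the sum is $0$. Multiplying each such representation by the corresponding integer $\alpha_{i,j}$, $\beta_{i,j}$, or $\gamma_{i,j}$ preserves the coefficient-sum-zero property (since $\kappa \cdot 0 = 0$), and then adding the results gives the conclusion.

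There is essentially no obstacle beyond bookkeeping: one only has to collect the coefficients of each $s_i$ and each $t_j$ arising from the various summands, but because we only need the total coefficient sum to vanish, it suffices to verify the property for the three generator types, which is immediate. One could alternatively invoke Proposition~\ref{lem:xiperiod} to see that $\sum a_i + \sum b_j$ must be divisible by $d/\gcd(d,s_1)$, but that only constrains the sum modulo an integer, whereas the present direct construction produces the sum equal to $0$ on the nose, which is what the lemma requires.
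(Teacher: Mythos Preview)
Your proof is correct and takes essentially the same approach as the paper's: both start from the B\'ezout representation \eqref{d} and show that the resulting coefficients of the $s_i$'s and $t_i$'s sum to zero. The paper carries this out by first telescoping each $s_j - s_i$ and $t_j - t_i$ into sums of consecutive differences and then explicitly writing out the coefficient of every $s_i$ and $t_i$, whereas you observe directly that each of the three generator types in \eqref{d} already has coefficient-sum zero and that this property is preserved under integer linear combination---a slightly tidier packaging of the same idea.
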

\begin{proof}
By noting that  $s_j- s_i = (s_j-s_{j-1})+(s_{j-1}-s_{j-2})+\cdots + (s_{i+1}-s_i)$ and $t_j-t_i = (t_j-t_{j-1})+\cdots + (t_{i+1}-t_i)$, the right-hand side of (\ref{d}) gives\begin{align*}
\displaystyle
d &= \sum_{1 \le i< j \le k_1}\alpha_{i,j} \sum_{k=i+1}^j (s_k-s_{k-1}) + \sum_{1 \le i < j \le k_2} \beta_{i,j} \sum_{k=i+1}^j (t_k-t_{k-1}) \\&\qquad+ \sum_{1 \le i \le k_1, 1\le j \le k_2}  \gamma_{i,j} \left(\sum_{k=2}^i(s_k-s_{k-1}) + \sum_{k=2}^j (t_k-t_{k-1})+ (s_1+t_1).\right) \\
&= \sum_{i=2}^{k_1} \alpha_i (s_i-s_{i-1}) + \sum_{i=2}^{k_2} \beta_i(t_i-t_{i-1}) + \gamma(s_1+t_1)
\end{align*}
where
\[
\alpha_i = \sum_{1 \le j <  i \le k\le k_1} \alpha_{j,k} +  \sum_{2 \le i \le j \le k_1, 1 \le k \le k_2} \gamma_{j,k},\]
\[\beta_i = \sum_{1 \le j < i \le k \le k_2} \beta_{j,k} +\sum_{1 \le j \le k_1, 2 \le i \le k \le k_2} \gamma_{j,k}, \] and \[
\gamma = \sum_{1 \le i \le k_1, 1 \le j \le k_2} \gamma_{i,j}.
\]
Therefore
\begin{align*}
\displaystyle
d= &\sum_{i = 2}^{k_1} \alpha_i (s_i-s_{i-1})
+ \sum_{i = 2}^{k_2} \beta_i (t_i-t_{i-1}) + \gamma(s_1 +t_1)\\
= &(\gamma - \alpha_2)s_1 + \sum_{i=2}^{k_1-1}(\alpha_i - \alpha_{i+1})s_i
 +\alpha_{k_1}s_{k_1}\\ & - (\beta_2 - \gamma)t_1 -\sum_{i=2}^{k_2-1}(\beta_{i+1}-\beta_i)t_i
 -(-\beta_{k_2})t_{k_2}.\end{align*}
Thus there are some integers $a_i,b_i$ such that
\begin{equation*}
d= \sum_{i = 1}^{k_1} a_i s_i
  - \sum_{i = 1}^{k_2}b_i t_i
\quad\text{ and }\quad\sum_{i = 1}^{k_1}a_i
                   +\sum_{i = 1}^{k_2}b_i = 0. \qedhere
\end{equation*}
\end{proof}

\begin{theorem}\label{lem:cons}
Let $d={\rm gcd}(S+T)$.
For a given positive integer $k$, there is an integer $r$ such that for any $j \in [k]$, \[\displaystyle r + jd =\sum_{i = 1}^{k_1} a_{i,j}s_i
- \sum_{i = 1}^{k_2}b_{i,j}t_i\] for some nonnegative integers
$a_{i,j}, b_{i,j}$ with the constant $\displaystyle f(j) := \sum_{i = 1}^{k_1} a_{i,j} + \sum_{i = 1}^{k_2}b_{i,j}$.
\end{theorem}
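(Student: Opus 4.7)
The plan is to bootstrap from Lemma~\ref{prop:gcd-1}, which yields integers $a_i, b_i$ (possibly negative) with $d = \sum_{i=1}^{k_1} a_i s_i - \sum_{i=1}^{k_2} b_i t_i$ and $\sum_i a_i + \sum_i b_i = 0$. The natural move is to split each coefficient into its positive and negative parts, $a_i = a_i^+ - a_i^-$ and $b_i = b_i^+ - b_i^-$ with $a_i^\pm, b_i^\pm \ge 0$, and introduce
\[
X := \sum_{i=1}^{k_1} a_i^+ s_i - \sum_{i=1}^{k_2} b_i^+ t_i, \qquad Y := \sum_{i=1}^{k_1} a_i^- s_i - \sum_{i=1}^{k_2} b_i^- t_i,
\]
so that $X - Y = d$. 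The constraint $\sum a_i + \sum b_i = 0$ rearranges into $\sum_i a_i^+ + \sum_i b_i^+ = \sum_i a_i^- + \sum_i b_i^-$; call this common value $N$. Thus $X$ and $Y$ each admit nonnegative representations having the same total count $N$, and the values they represent differ by exactly $d$.

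Next I would set $r := kY$ and, for each $j \in [k]$, take the convex-combination coefficients
\[
a_{i,j} := j\, a_i^+ + (k-j)\, a_i^-, \qquad b_{i,j} := j\, b_i^+ + (k-j)\, b_i^-,
\]
which are nonnegative since $j, k-j, a_i^\pm, b_i^\pm \ge 0$. A direct calculation gives
\[
\sum_{i=1}^{k_1} a_{i,j} s_i - \sum_{i=1}^{k_2} b_{i,j} t_i \;=\; jX + (k-j)Y \;=\; j(X-Y) + kY \;=\; r + jd,
\]
while
\[
\sum_{i=1}^{k_1} a_{i,j} + \sum_{i=1}^{k_2} b_{i,j} \;=\; jN + (k-j)N \;=\; kN,
\]
so that $f(j)$ takes the constant value $kN$, independently of $j \in [k]$.

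I do not expect any serious obstacle once Lemma~\ref{prop:gcd-1} is available: the only conceptual step is the positive/negative splitting, which converts the signed representation of $d$ into a pair of nonnegative representations of equal count and then parameterizes the $k$ targets $r+d, r+2d, \ldots, r+kd$ as interpolations between them. Note $r = kY$ may be negative, but the statement only requires $r$ to be an integer, so this is allowed.
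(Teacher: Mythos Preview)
Your proof is correct and follows essentially the same approach as the paper: both invoke Lemma~\ref{prop:gcd-1} and then shift the coefficients $ja_i, jb_i$ by a fixed nonnegative amount to force nonnegativity while keeping the total count $f(j)$ constant. The paper shifts by $k|a_i|$ and $k|b_i|$ (taking $a_{i,j}=ja_i+k|a_i|$, $b_{i,j}=jb_i+k|b_i|$, $r=\sum k|a_i|s_i-\sum k|b_i|t_i$), whereas you phrase the same idea as an interpolation between the positive and negative parts; the resulting $r$ and $a_{i,j},b_{i,j}$ differ numerically but the argument is the same.
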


\begin{proof}
 By Lemma~\ref{prop:gcd-1},
there are some integers $a_i,b_i$ such that
\[\displaystyle
d= \sum_{i = 1}^{k_1} a_i s_i
  - \sum_{i = 1}^{k_2}b_i t_i
\quad\text{ and }\quad\sum_{i = 1}^{k_1}a_i
                   +\sum_{i = 1}^{k_2}b_i = 0.
 \]
Thus
\[\displaystyle
jd = \sum_{i=1}^{k_1} (ja_{i})s_i
   - \sum_{i = 1}^{k_2} (jb_{i})t_i
\]
for each $j \in [k]$.
By adding
$\displaystyle r:=\sum_{i = 1}^{k_1} (k|a_i|)s_i -\sum_{i=1}^{k_2}(k|b_i|)t_i$
to both sides of the above equality, we have
\[\displaystyle
r+jd = \sum_{i=1}^{k_1} a_{i,j}s_i -
       \sum_{i = 1}^{k_2} b_{i,j}t_i
\]
where $a_{i,j} = ja_i+ k|a_i|$ and
$b_{i,j} = jb_i+k|b_i|$,
which are nonnegative for each $j \in[k]$.
Moreover,
\begin{align*}
\displaystyle \sum_{i=1}^{k_1} a_{i,j}+\sum_{i=1}^{k_2}b_{i,j}
&= j\left( \sum_{i=1}^{k_1}a_i + \sum_{i=1}^{k_2}b_i\right) +
b\left( \sum_{i=1}^{k_1}|a_i| + \sum_{i=1}^{k_2}|b_i|\right) \\
&= k\left( \sum_{i=1}^{k_1}|a_i| + \sum_{i=1}^{k_2}|b_i|\right)\end{align*}
for each $j \in [k]$ and so $f(j)$ is constant.
\end{proof}

\begin{theorem}
There exists a positive integer $m^\star$ such that $P_{m^\star} \subseteq Q_{m^\star}$.
\end{theorem}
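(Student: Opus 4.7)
The plan is to combine an explicit construction of nonnegative representations (in the spirit of Theorem~\ref{lem:cons} and Lemma~\ref{prop:gcd-1}) with a finiteness argument for $P_m$.  I would first observe that $P_m\subseteq\mathcal{I}_n$ has at most $\lceil(2n-1)/d\rceil$ elements and depends only on $m\pmod{d/d'}$, since $m\mapsto ms_1\pmod d$ has period $d/d'$.  Moreover, all members of $P_m$ share the common residue $ms_1\pmod d$, so they determine a single $k_L\in\{0,1,\ldots,d/d'-1\}$ (independent of the choice of $L\in P_m$) with $k_Ls_1\equiv L\pmod d$, and $m\equiv k_L\pmod{d/d'}$.

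For any $L\in\mathcal{I}_n$ with $d'\mid L$, I would show $L\in Q_m$ for every sufficiently large $m$ in the residue class $k_L\pmod{d/d'}$.  Fix any integer $k\equiv k_L\pmod{d/d'}$; then $L-ks_1$ is a multiple of $d$, say $L=ks_1+Nd$, and Lemma~\ref{prop:gcd-1} expands $Nd=\sum Na_is_i-\sum Nb_jt_j$, yielding an integer representation $L=\sum A_is_i-\sum B_jt_j$ whose coefficients sum to $k$.  I would then modify this representation by repeatedly choosing $p\in[k_1]$ and $q\in[k_2]$ and adding $t_q$ to the coefficient of $s_p$ and $s_p$ to the coefficient of $t_q$; since $t_qs_p=s_pt_q$ this leaves $L$ invariant while adding $s_p+t_q$ to the total, and after enough such modifications every coefficient becomes nonnegative.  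Further such modifications realise $L\in Q_{k+c}$ for every $c$ in the numerical semigroup $\langle s_p+t_q:p\in[k_1],\,q\in[k_2]\rangle$ exceeding a threshold depending on $L$ and $k$; by Proposition~\ref{prop:gcd} this semigroup has gcd $d$, so the Chicken McNugget theorem gives $L\in Q_m$ for every $m\equiv k\pmod d$ sufficiently large.  Varying $k$ over the $d'$ residues of the coset $k_L+(d/d')\mathbb{Z}$ modulo $d$ then yields $L\in Q_m$ for every sufficiently large $m\equiv k_L\pmod{d/d'}$.

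Finally, since $P_{m^\star}$ is finite and all its elements share the same $k_L$, I would pick $m^\star\equiv k_L\pmod{d/d'}$ exceeding every Frobenius-type threshold associated with the $L\in P_{m^\star}$, concluding $P_{m^\star}\subseteq Q_{m^\star}$.  The main obstacle is the residue mismatch between $\pmod d$ and $\pmod{d/d'}$: the naive semigroup $\langle s_p+t_q\rangle$ lives in $d\mathbb{Z}$, so any single $k$ produces reachable totals only in $k+d\mathbb{Z}$, whereas the target residue class of $m^\star$ is constrained only modulo the possibly smaller $d/d'$; varying $k$ over the whole coset $k_L+(d/d')\mathbb{Z}$ precisely closes this gap.
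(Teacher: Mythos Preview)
Your argument is correct but takes a genuinely different route from the paper.  The paper avoids the Frobenius/Chicken McNugget machinery and the $d$ versus $d/d'$ residue bookkeeping entirely: it invokes Theorem~\ref{lem:cons} once to obtain, for a \emph{single} common total $m$, nonnegative representations of an entire arithmetic progression $r+d,\,r+2d,\,\ldots,\,r+kd$ (with $k$ chosen so that $(k-1)d>2(n-1)+s_1+t_1$), and then shifts $r$ by multiples of $s_1$ or $t_1$ so that this progression covers all of $\mathcal{I}_n$.  Since every element of $P_m$ is congruent to $r$ modulo $d$, it already lies in the progression and hence in $Q_m$; this gives $P_m\subseteq Q_m$ directly for one explicit $m^\star$, with no need to reconcile two different periods.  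Your approach instead treats each target $L$ separately and argues, via the numerical semigroup $\langle s_p+t_q\rangle$, that $L\in Q_m$ for all sufficiently large $m$ in the correct residue class; this forces you to vary $k$ over a full coset modulo $d$ to bridge the gap between the semigroup's period $d$ and the period $d/d'$ of $\{P_m\}$.  The paper's argument is more uniform and yields a cleaner constructive handle on $m^\star$; yours is more modular and makes the semigroup structure explicit, at the cost of the extra Frobenius step and the coset enumeration.  One small presentational point: your final paragraph reads as if $P_{m^\star}$ were known before $m^\star$ is chosen---you should first fix the residue class $r\pmod{d/d'}$ (which pins down the set $P_{m^\star}$), collect the finitely many thresholds for its elements, and only then choose $m^\star$ large in that class.
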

\begin{proof}
For given sets of positive integers $S$ and $T$,
let $d={\rm gcd}(S+T)$.
By Lemma~\ref{prop:gcd-1},
there are some integers $a_i,b_i$ such that
\[\displaystyle
d= \sum_{i = 1}^{k_1} a_i s_i
  - \sum_{i = 1}^{k_2}b_i t_i
\quad\text{ and }\quad\sum_{i = 1}^{k_1}a_i
                   +\sum_{i = 1}^{k_2}b_i = 0.
 \]

Take a sufficiently large positive integer $k$ such that
\begin{equation}\label{eq:qd}
(k-1)d >2(n-1)+t_1+s_1.	
\end{equation}
By Theorem~\ref{lem:cons},
there are an integer $r$ and nonnegative integers
$a_{i,j}, b_{i,j}$ such that for each $j\in[k]$,
\begin{equation} \label{eq:a}
r+ jd = \sum_{i=1}^{k_1} a_{i,j}s_i - \sum_{i=1}^{k_2} b_{i,j}t_i
\end{equation}
with $\displaystyle \sum_{i = 1}^{k_1} a_{i,j} + \sum_{i=1}^{k_2}b_{i,j}$ constant.

{\it Case 1.} $r+d<-(n-1)$.
Then there is the smallest positive integer $c$ such that
\begin{equation}\label{eq:cs}
r+d+cs_1 \ge -(n-1).
\end{equation}
Let \[r' = r+(c-1)s_1, \quad a'_{1,j} = a_{1,j}+c-1, \quad \text{and}\quad a'_{i,j} = a_{i,j}\] for each $2 \le i \le k_1$ and $1 \le j\le k$.
Then, by \eqref{eq:a},
\begin{equation}	\label{eq:equality}
m:  =\sum_{i=1}^{k_1}a'_{i,j} + \sum_{i=1}^{k_2}b_{i,j} = \sum_{i=1}^{k_1}a_{i,j}+\sum_{i=1}^{k_2}b_{i,j}+c-1,
\end{equation}
which is constant, and
\begin{equation} \label{eq:equality1}
r'+ jd  = \sum_{i=1}^{k_1} a'_{i,j}s_i - \sum_{i=1}^{k_2} b_{i,j}t_i
\end{equation} for each $j \in [k]$.
By Proposition~\ref{lem:xiperiod},
\[r' \equiv  ms_1 \pmod{d}.\]
Moreover, \[r'+d<-(n-1)\] by the choice of $c$.
On the other hand, by \eqref{eq:qd} and \eqref{eq:cs},
\[ r'+bd = (r'+d+s_1) + ((k-1)d-s_1) \ge (r+d+cs_1) +2(n-1) \ge  n-1.\]

Take $p\in P_m$.
Then $p \equiv ms_1 \equiv r' \pmod d$ and
 \[r'+d \le -(n-1) \le p \le n-1 \le r'+kd.\]
Therefore $p \in \{r'+d,\ldots,r'+kd\}$.
Thus $p\in Q_m$ by \eqref{eq:equality} and \eqref{eq:equality1}.

{\it Case 2.} $r+d\ge -(n-1)$.
Then there is the smallest positive integer $c'$ such that
\begin{equation} \label{eq:cprime}
r+d-c't_1 < -(n-1).	
\end{equation}
Let
\[r'' =  r-(c'-1)t_1, \quad b'_{1,j} = b_{1,j}+ c'-1, \quad \text{and} \quad b'_{i,j} = b_{i,j}\]
 for each $2 \le i \le k_2$ and $1 \le j \le k$.
 Then, by \eqref{eq:a},
\begin{equation} \label{eq:mprime}
m:=r''+jd = \sum_{i=1}^{k_1} a_{i,j}s_i - \sum_{i=1}^{k_2} b'_{i,j}t_i
\end{equation}  with $\displaystyle \sum_{i = 1}^{k_1} a_{i,j} + \sum_{i=1}^{k_2}b'_{i,j}$ constant
for each $j \in [k]$.
Then $r'' \equiv ms_1 \pmod d$ by Proposition~\ref{lem:xiperiod}.
Moreover, $r''+d < -(n-1)$ by the choice of $c'$ and, by \eqref{eq:qd} and \eqref{eq:cprime},
 \[
r''+kd = (r''+d+t_1)+((k-1)d-t_1) \ge (r+d-c't_1)+ 2(n-1) \ge n-1.
\]

Take $p\in P_m$.
Then $p \equiv ms_1 \equiv r'' \pmod d$ and $r''+d \le p \le r''+bd$, i.e.
$p \in \{r''+d,\ldots,r''+bd\}$.
Thus $p\in Q_m$ by \eqref{eq:mprime}.
Hence, in both cases, we have shown that there is a positive integer $m^*$ such that $P_{m^*} \subseteq Q_{m^*}$.\end{proof}

\subsection{Taking a candidate for $M$ satisfying Theorem~\ref{lem:pqr}}

 We have shown there exists a positive integer $m^*$ such that $P_{m^*} \subseteq Q_{m^*}$.
Now fix $p\in P_{m^*}$.
Then $p \in Q_{m^*}$ and there are nonnegative integers
$a_{i,p}, b_{i,p}$ such that
\begin{equation}\label{eq:comb}
\sum_{i=1}^{k_1}a_{i,p}+\sum_{i=1}^{k_2}b_{i,p}=m^* \quad \text{and} \quad p = \sum_{i=1}^{k_1}a_{i,p}s_i - \sum_{i=1}^{k_2}b_{i,p}t_i.
\end{equation}
By Theorem~\ref{lem:walk}(a) and \eqref{eq:comb},
\begin{itemize}
\item[($\star$)]for each vertex $v \in V(D)$,
there exists a directed walk
$W_{v,p}$ starting from $v$ such that $W_{v,p}$ contains exactly $a_{i,p}$ $s_i$-arcs and
$b_{j,p}$ $t_j$-arcs for each $2 \le i \le k_1$ and $2 \le j \le k_2$ where $\sum_{i=1}^{k_1}a_{i,p}+\sum_{i=1}^{k_2}b_{i,p}=m^*$ and $p = \sum_{i=1}^{k_1}a_{i,p}s_i - \sum_{i=1}^{k_2}b_{i,p}t_i$.
\end{itemize}
Let $a_{v,p}$ (resp. $b_{v,p}$) be the number of $s_1$-arcs (resp. $t_1$-arcs) in $W_{v,p}$.
Then there exists some positive integer $c$ such that
\[
a_{1,p}+c t_1 \ge \max_{v\in V(D)}a_{v,p} \quad \text{and}\quad
b_{1,p}+c s_1 \ge \max_{v\in V(D)}b_{v,p}.
\]
Now, by \eqref{eq:comb},
\begin{equation} \label{eq:another}
p = \sum_{i=2}^{k_1}a_{i,p}s_i - \sum_{i=2}^{k_2}b_{i,p}t_i + (a_{1,p}+ct_1)s_1 - (b_{1,p}+cs_1)t_1.
\end{equation}
We note that
\[\sum_{i=2}^{k_1}a_{i,p}+\sum_{i=2}^{k_2}b_{i,p}+(a_{1,p}+ct_1)+(b_{1,p}+cs_1)=\sum_{i=2}^{k_1}a_{i,p}+\sum_{i=1}^{k_2}b_{i,p}+c(s_1+t_1)=m^*+c(s_1+t_1).\]
Let
\[
M=m^*+c(s_1+t_1)\] which is a candidate for Theorem~\ref{lem:pqr}.
Then, since $d \mid (s_1+t_1)$,
\begin{equation}\label{eq:pmpm}
P_{m^*} = P_M.
\end{equation}
\subsection{$(\forall {i \ge M}) [P_i \subseteq R_i]$ }
Since $R_i \subseteq Q_i \subseteq P_i$ for any positive integer $i$ by \eqref{eq:pqr}, $(\forall {i \ge M})[P_i \subseteq R_i]$ implies $(\forall {i \ge M})[P_i=Q_i=R_i]$.
 Thus proving that $P_i \subseteq R_i$ for any integer $i \ge M$ will  complete the proof of Theorem~\ref{lem:pqr}.

Now fix $\ell \ge M$.
By Lemma~\ref{lem:abcd}(c),
\begin{align*}
P_{\ell} & =\{p \in \mathcal{I}_n\mid p=p_0+a s_1-b t_1 \text{ for some $p_0 \in P_M$ and  nonnegative integers $a$ and $b$  with }  \\
& \hskip1.5cm a+b=\ell-M\}.	
\end{align*}
To show $P_\ell \subseteq R_\ell$, take $p \in P_\ell$.
Then there exist some $p_0 \in P_M$, and nonnegative integers $a$ and $b$ such that \[
p = p_0 + a s_1 - b t_1\quad \text{and}\quad a+b = \ell - M.
\]
Furthermore, by \eqref{eq:comb} and \eqref{eq:pmpm},
 \[p_0=\sum_{i=1}^{k_1}a_{i,0}s_i - \sum_{j=1}^{k_2}b_{j,0}t_j\]
where $a_{i,0}$, $b_{j,0}$ are nonnegative integers with
\[\sum_{i=1}^{k_1}a_{i,0}+\sum_{j=1}^{k_2}b_{j,0}=m^\star.\]
Therefore
\[p=\sum_{i=2}^{k_1}a_{i,0}s_i - \sum_{j=2}^{k_2}b_{j,0}t_j +(a_{1,0}+ct_1+a)s_1 - (b_{1,0}+cs_1+b)t_1.
\]
Now,
\begin{align*}
\sum_{i=2}^{k_1}a_{i,0} & + \sum_{j=2}^{k_2}b_{j,0} +(a_{1,0}+ct_1+a)+ (b_{1,0}+cs_1+b) \\ & =\sum_{i=1}^{k_1}a_{i,0}+\sum_{j=1}^{k_2}b_{j,0}+c(s_1+t_1)+(a+b) \\ & =m^\star+c(s_1+t_1)+(\ell-M)=M+(\ell-M)=\ell.
\end{align*}
To show $p \in R_\ell$, take vertices $v$ and $v+p$ of $D$.
Then there exists a directed walk $W_{v,0}$ that contains exactly $a_{i,0}$ $s_i$-arcs and $b_{j,0}$ $t_j$-arcs for each $2 \le i \le k_1$ and $2 \le j \le k_2$  by ($\star$).
Now we need to extend $W_{v,0}$ by attaching to its terminus
\[w:=v+\sum_{i=2}^{k_1}a_{i,0}-\sum_{j=2}^{k_2}b_{j,0}\]
a directed walk consisting of $s_1$-arcs exactly as many as $a_{1,0} + ct_1 + a$ and $t_1$-arcs exactly as many as $b_{1,0} + cs_1+b$.
We note that
\[w+(a_{1,0}+ct_1+a)s_1-(b_{1,0}+cs_1+b)t_1=v+p.\]
Since $v+p$ is a vertex in $D$, $1 \le v+p \le n$ and so the existence of such a directed walk to be attached to $W_{v,0}$ is guaranteed by applying Theorem~\ref{lem:walk}(b).
Since $v$ was arbitrarily chosen, $p \in R_\ell$.
In addition, $p$ was arbitrarily chosen, and so we finally have
\begin{equation*}
P_\ell \subseteq R_\ell. \tag*{$\square$}
\end{equation*}

\section{Acknowledgement}
This work was partially supported by Science Research Center Program through the National Research Foundation of Korea(NRF) Grant funded by the Korean Government (MSIP)(NRF-2016R1A5A1008055). G.-S. Cheon was partially supported by the NRF-2019R1A2C1007518. Bumtle Kang was partially supported by the NRF-2021R1C1C2014187. S.-R. Kim and H. Ryu were partially supported by the Korea government (MSIP) (NRF-2017R1E1A1A03070489 and NRF-2022R1A2C1009648).

\end{document}